\newcommand{\RR}{\mathbb R}
\newcommand{\mf}{\mathcal{F}}
\newcommand{\mmab}{\mathcal{M}_{\alpha,\beta}}
\newcommand{\mzab}{\mathcal{Z}_{\alpha,\beta}}
\newcommand{\beqn}{\begin{equation}}
\newcommand{\eeqn}{\end{equation}}
\newcommand{\bean}{\begin{eqnarray}}
\newcommand{\eean}{\end{eqnarray}}
\DeclareMathAlphabet{\mathpzc}{OT1}{pzc}{m}{it}
\newtheorem{theorem}{Theorem}[section]
\newtheorem{corollary}[theorem]{Corollary}
\newtheorem{lemma}[theorem]{Lemma}
\newtheorem{remark}[theorem]{Remark}
\numberwithin{equation}{section}
\begin{document}
\title{A phase-field approximation of the Willmore flow\\ with volume and area constraints} 
\thanks{Partially supported by FP7-IDEAS-ERC-StG Grant \#200947 (BioSMA) and the MIUR-PRIN
Grant 2008ZKHAHN ``Phase transitions, hysteresis and multiscaling''}
\author{Pierluigi Colli}
\address{Dipartimento di Matematica ``F.~Casorati'', Universit\`a di Pavia, Via Ferrata 1, I-27100~Pavia,~Italy}
\email{pierluigi.colli@unipv.it}
\author{Philippe Lauren\c cot}
\address{Institut de Math\'ematiques de Toulouse, CNRS UMR~5219, Universit\'e de Toulouse, F--31062 Toulouse Cedex 9, France} 
\email{laurenco@math.univ-toulouse.fr}
\keywords{phase-field approximation, gradient flow, minimization principle, well-posedness}
\subjclass{35K35, 35K55, 49J40}
\date{\today}
%
\begin{abstract}
The well-posedness of a phase-field approximation to the Willmore flow with area and volume constraints is established when the functional approximating the area has no critical point satisfying the two constraints. The existence proof relies on the underlying gradient flow structure of the problem: the time discrete approximation is solved by a variational minimization principle. The main difficulty stems from the nonlinearity of the area constraint.
\end{abstract}

\maketitle

%
%
\pagestyle{myheadings}
\markboth{\sc{Pierluigi Colli and Philippe Lauren\c cot}}{\sc{A phase-field approximation of the Willmore flow with volume and area constraints}}
%
\section{Introduction}\label{sec:intro}

Biological cell membranes define the border between the interior of the cell and its surrounding medium and can be roughly described as a lipid bilayer in which several kinds of lipids are assembled and through which proteins diffuse. The size of the cell (a few microns) is typically much larger than the thickness of the membrane (a few nanometers) and a possible approach to model the geometric properties of the latter is to assume the membrane to be a two-dimensional embedded surface $\Sigma$ in $\mathbb{R}^3$ with a shape at equilibrium being determined by the Canham-Helfrich elastic bending energy 
\begin{equation}
\mathcal{E}_{CH}(\Sigma) := \int_\Sigma \left[ \frac{k}{2} \left( \mathcal{H} - \mathcal{H}_0 \right)^2 + \frac{k_g}{2}\ \mathcal{K} \right]\ dS\,, \label{i0}
\end{equation}
see, e.g., \cite{Ca09,CHM06,DLW04,Li91} and the references therein. Here, $\mathcal{H} := (h_1+h_2)/2$ is the arithmetic mean of the principal curvatures $h_1$ and $h_2$ of $\Sigma$ (scalar mean curvature), $\mathcal{K} := h_1 h_2$ is the product of its principal curvatures (Gau\ss{} curvature), $k$ and $k_g$ are the bending rigidity and the Gaussian curvature rigidity, respectively, and $\mathcal{H}_0$ denotes the spontaneous curvature which accounts for the asymmetry of the membrane. Let us mention here that, when $\mathcal{H}_0=k_g=0$ and $k=1$, the functional $\mathcal{E}_{CH}$ is nothing but the Willmore functional which is a well-known object in differential geometry \cite{Wi93}. Two natural geometric constraints come along with cell membranes: the inextensibility of the membrane fixes the total area while a volume constraint follows from its permeability properties \cite{CHM06,DLW06}. 

Recently, experimental results have shown evidence of dynamic instabilities in membranes, see, e.g., \cite{Ca09} and the references therein, and provided the impetus for the development of dynamical models. A first approach is to consider the gradient flow associated to the Canham-Helfrich functional which describes the time evolution of a family of (smooth) surfaces $(\Sigma(t))_{t\ge 0}$ and reads (in the simplified situation $k=1$ and $\mathcal{H}_0=k_g=0$)
\begin{equation}
\mathcal{V} = - \Delta_\Sigma\mathcal{H} - 2\ \mathcal{H} \left( \mathcal{H}^2 - \mathcal{K} \right)\,,  \label{i1}
\end{equation}
where $\mathcal{V}$ and $\Delta_\Sigma$ denote the normal velocity to $\Sigma$ and the Laplace-Beltrami operator on $\Sigma$, respectively. Volume and area constraints can also be included and result in an additional term in the right-hand side of \eqref{i1} of the form $\ell+m \mathcal{H}$, the parameters $\ell$ and $m$ being the Lagrange multipliers associated with the two constraints. The main drawback of this approach is that it requires to solve a highly nonlinear free boundary problem which is difficult to study analytically and costly to compute numerically. However, numerical schemes have been recently developed for geometric evolution equations such as \eqref{i1}, see, e.g., \cite{BGN08}. 

A well-known alternative to free boundary problems is the phase-field approach where the sharp interface $\Sigma(t)$ is replaced by a diffuse interface which is nothing but a thin neighbourhood of thickness $\varepsilon$ of the zero level set of an $\varepsilon$-dependent smooth function, the order parameter. For biological membranes, this approach has been developed in several recent papers with and without the volume and area constraints \cite{BM10,Ca09,CHM06,CHM07,DLW04,DLW06,LM00} and can be described as follows, still in the simplified situation $k=1$ and $\mathcal{H}_0=k_g=0$: let $W$ be a smooth double-well potential (for instance, $W(r)=(1-r^2)^2/4$) and for $\varepsilon>0$ and $w\in H^2(\Omega)$ define the free energy $E_\varepsilon[w]$ by
\begin{equation}
E_\varepsilon[w] := \frac{\varepsilon}{2}\ \int_\Omega \left( \Delta w - \frac{W'(w)}{\varepsilon^2} \right)^2\ dx\,, \label{i2}
\end{equation}
where $\Omega$ is the spatial domain which comprises the cells and their surrounding medium. The corresponding phase-field model is the gradient flow of $E_\varepsilon$ in $L^2$ and reads
\begin{equation}
\partial_t v_\varepsilon = \varepsilon\ \Delta\mu_\varepsilon - \frac{1}{\varepsilon}\ W''(v_\varepsilon)\ \mu_\varepsilon\,, \qquad \mu_\varepsilon = - \Delta v_\varepsilon + \frac{1}{\varepsilon^2}\ W'(v_\varepsilon)\,, \label{i3}
\end{equation}
 supplemented with suitable initial and boundary conditions. As $\varepsilon$ approaches $0$, the function $v_\varepsilon$ is close to the values $\pm 1$ in large regions of the domain separated by narrow transition layers of width $\varepsilon$ around the zero level set $\{ x\ :\ v_\varepsilon(t,x)=0 \}$ of $v_\varepsilon$ at time $t$. It is this time-dependent family of level sets which is expected to converge as $\varepsilon\to 0$ to a family of surfaces $(\Sigma(t))_{t\ge 0}$ evolving according to the geometric motion \eqref{i1}. Formal asymptotic expansions have been performed to check the consistency of the free energy \eqref{i2} and the phase-field model \eqref{i3} with \eqref{i0} and \eqref{i1}, respectively, in the limit $\varepsilon=0$ \cite{CHM06,DLRW05,LM00,Wa08}. While no rigorous justification seems to be available so far for the evolution problem, the relationship between the minimizers of the Canham-Helfrich functional \eqref{i0} (without constraints) and those of the free energy \eqref{i2} has been the subject of recent studies \cite{BM10,DG91,Mo05,RS06}. In fact, the existence of minimizers of the free energy \eqref{i2} follows by standard arguments from the theory of the calculus of variations. But the well-posedness of the evolution phase-field model \eqref{i3} for a fixed positive $\varepsilon$ seems to be less obvious and, as far as we know, it is a widely open topic, though the phase-field approach has been used quite extensively to perform numerical simulations of the dynamics of biological membranes \cite{CHM06,CHM07,DLW04,DLW06}. We are only aware of two contributions in that direction. On the one hand, the well-posedness of the phase-field model \eqref{i3} with $\varepsilon=1$ and a volume constraint fixing the average of $v$ has been shown in \cite{CL11}. On the other hand, the existence of a weak solution to a system coupling a phase-field model (similar to \eqref{i3} but with a convection term) with the Navier-Stokes equation is established in \cite{WXxx}, relaxing the volume and area constraints by a penalisation approach. Therefore, accounting for both volume and area constraints in the phase-field approach to biological membranes does not seem to have been considered yet and is the focus of this paper. Before describing precisely our result, we recall that, in the phase-field approximation, the volume and area conservations read \cite{CHM06,DLRW05}:
\begin{equation}
\int_\Omega v_\varepsilon(t,x)\ dx = \text{const.} \;\;\mbox{ and }\;\; F_\varepsilon[v_\varepsilon] := \int_\Omega \left( \frac{\varepsilon}{2}\ |\nabla v_\varepsilon(t,x)|^2 + \frac{1}{\varepsilon}\ W(v_\varepsilon(t,x)) \right)\ dx = \text{const.} \label{i4}
\end{equation}
Indeed, recall that, as $\varepsilon\to 0$, the functional $F_\varepsilon$ approximates the perimeter functional \cite{MM77}. 

The purpose of this paper is then to investigate the existence and uniqueness of the phase-field approximation to the geometric flow  \eqref{i1} when both volume and area are fixed as described in \eqref{i4}. We may then set $\varepsilon=1$ in the forthcoming analysis and define
\begin{align}
W(r) & := \frac{a}{4} \left( r^2 - 1 \right)^2\,, \qquad r\in\RR\,, \nonumber\\[0.2cm]
F[w] & := \frac{\|\nabla w\|_2^2}{2} + \int_\Omega W(w(x))\, dx\,, \qquad w\in H^1(\Omega)\,, \label{b0}
\end{align}
where $a$ is a given positive real number. The phase-field approximation of \eqref{i1} with fixed volume and area turns out to be
\begin{align}
\partial_t v & = \Delta \mu - W''(v)\ \mu + A + B\ \mu \,, \qquad (t,x)\in (0,\infty)\times\Omega\,, \label{pf1} \\
\mu & = - \Delta v + W'(v)  \,, \qquad (t,x)\in (0,\infty)\times\Omega\,, \label{pf2} \\
\nabla v\cdot \mathbf{n} & = \nabla \mu \cdot\mathbf{n} = 0\,, \qquad (t,x)\in (0,\infty)\times\partial\Omega\,, \label{pf3} \\
v(0) & = v_0\,, \qquad x\in\Omega\,, \label{pf4}
\end{align}
where $\mathbf{n}$ denotes the outward unit normal vector field to $\partial\Omega$ and $\overline{w}$ the spatial average of $w\in L^1(\Omega)$, that is,
$$
\overline{w} := \frac{1}{|\Omega|}\ \int_\Omega w(x)\ dx\,.
$$
In \eqref{pf1}, $A$ and $B$ are time-dependent functions and the Lagrange multipliers corresponding to the volume and area constraints
\begin{equation}
\overline{v(t)} = \overline{v_0}\;\; \mbox{ and }\;\; F[v(t)] = F[v_0]\,, \qquad t\ge 0\,. \label{pf5}
\end{equation} 
When the area constraint is not taken into account (which corresponds to take $B=0$ in \eqref{pf1} and keep only the first constraint in \eqref{pf5}), the well-posedness of the resulting version of \eqref{pf1}-\eqref{pf5} is shown in \cite{CL11}. In that case, the equation turns out to be a gradient flow for the free energy
\begin{equation}\label{b4}
E[w] := \frac{1}{2}\ \int_\Omega \left[ - \Delta w + W'(w) \right]^2\ dx\,, \qquad w\in H^2(\Omega)\,,
\end{equation}
and the existence proof exploits this structure and relies on a time-discrete minimization scheme. This gradient flow structure is still available for \eqref{pf1}-\eqref{pf5} with the main difference that we now have two constraints including the additional one, which is nonlinear and generates several new difficulties in the analysis of the minimizing scheme. In particular, we emphasize that the two constraints may not be linearly independent and this happens in particular for critical points of $F$ under a volume constraint. Deriving the Euler-Lagrange equation for the time-discrete minimization scheme is then not obvious, with the further drawback that the area constraint is nonlinear. This difficulty strikes back when we wish to estimate the Lagrange multiplier $B$ and we have to restrict our analysis to the case where critical points of $F$ under a volume constraint cannot be reached during time evolution. 

Let us now introduce some notations: given $\alpha\in\RR$ and $\beta\in [0,\infty)$, the fact that there exists at least one function $w\in H^1(\Omega)$ satisfying simultaneously $\overline{w}=\alpha$ and $F[w]=\beta$ is not granted and requires a compatibility condition which we describe now. We set 
\begin{equation}\label{b1}
\beta_\alpha := \inf{\left\{ F[w]\ :\ w\in H^1(\Omega)\,, \ \overline{w} = \alpha \right\}}\,,
\end{equation}
which is well-defined owing to the nonnegativity of $F$, and
\begin{align}
\mathcal{M}_{\alpha,\beta}^1 & := \left\{ w\in H^1(\Omega)\ :\ \overline{w} = \alpha \;\mbox{ and }\; F[w]=\beta \right\}\,, \label{b2} \\
\mathcal{M}_{\alpha,\beta}^2 & := \left\{ w\in H^2_N(\Omega)\ :\ w\in \mathcal{M}_{\alpha,\beta}^1 \right\}\,, \label{b2b}
\end{align}
where
$$
H_N^2(\Omega) := \{ w \in H^2(\Omega)\ :\ \nabla v\cdot \mathbf{n} = 0 \;\;\mbox{ on }\;\; \partial\Omega \}\,.
$$ 
Clearly, $\mathcal{M}_{\alpha,\beta}^1=\emptyset$ if $ \beta\in [0,\beta_\alpha)$, while we shall show below that $\mathcal{M}_{\alpha,\beta}^i$, $i=1,2$, is quite large if $\beta>\beta_\alpha$ (see Lemma~\ref{le:b1} below).

Finally, as already mentioned, we have to exclude some values of the parameters $(\alpha,\beta)$ for which there are critical points of $F$ under a volume constraint in $\mathcal{M}_{\alpha,\beta}^2$. To this end, we introduce the set $\mathcal{Z}_{\alpha,\beta}$ defined by
\begin{equation}\label{b3}
\mathcal{Z}_{\alpha,\beta} := \left\{ w\in \mathcal{M}_{\alpha,\beta}^2\ :\ -\Delta w + W'(w) - \overline{W'(w)} = 0 \;\mbox{ in }\; \Omega \right\}\,,
\end{equation}
and shall require this set to be empty, an assumption which is fulfilled if $\beta$ is sufficiently large compared to $|\alpha|$, see Lemma~\ref{le:b3} below. We may now state our result:

\begin{theorem}\label{th:a1}
Consider $\alpha\in\RR$ and $\beta\in (\beta_\alpha,\infty)$ such that 
\begin{equation}
\mzab = \emptyset\,. \label{a0}
\end{equation}
Given an initial condition $v_0\in\mmab^2$, there is a unique function 
\begin{equation*}
v\in\mathcal{C}([0,\infty)\times\overline{\Omega}) \cap L^\infty(0,\infty; H^2(\Omega))\,, \qquad v(0)=v_0\,, 
\end{equation*}
such that, for all $t>0$, 
\begin{equation*}
 v(t)\in\mmab^2\,, \qquad \mu := -\Delta v + W'(v) \in L^2(0,t; H_N^2(\Omega))\,,
\end{equation*}
and there are two functions $A\in L^2(0,t)$ and $B\in L^2(0,t)$ such that 
\begin{equation}
\partial_t v = \Delta\mu - W''(v)\ \mu + A + B\ \mu \;\;\mbox{ a.e. in }\;\; (0,t)\times\Omega\,. \label{a4}
\end{equation}
In fact, there hold
\begin{equation}
A + B\ \overline{\mu} = \overline{W''(v)\mu} \label{a1}
\end{equation}
and
\begin{equation}
B\ \left\| \mu - \overline{\mu} \right\|_2^2 = \|\nabla\mu\|_2^2 + \int_\Omega W''(v)\ \mu^2\ dx - \overline{W''(v)\mu}\ \int_\Omega\mu\ dx \,. \label{a2}
\end{equation}
Moreover, for all $t>0$ there exists $\varepsilon(t)>0$ such that
\begin{equation}
\|(\mu - \overline{\mu})(s)\|_2\ge \varepsilon(t)>0\,, \qquad s\in [0,t]\,. \label{a3}
\end{equation}
\end{theorem}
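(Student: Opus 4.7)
I would set up a De Giorgi minimizing-movements scheme for the constrained gradient flow. Fix $\tau>0$ and, starting from $v_\tau^0 := v_0$, define inductively $v_\tau^{n+1}$ as a minimizer of
\[
J_\tau^n[w] := \frac{1}{2\tau}\|w-v_\tau^n\|_2^2 + E[w]
\]
over $\mmab^1$. Since finiteness of $E[w]$ forces $w\in H_N^2(\Omega)$, the minimizer, when it exists, automatically lies in $\mmab^2$. Existence at each step follows from the direct method: $J_\tau^n$ is coercive on $H^2$ and sequentially weakly lower semicontinuous, and $\mmab^1$ is closed under weak $H^1$-convergence of energy-bounded sequences (the volume functional is linear-continuous, and along such a sequence one gets strong $L^2$ convergence, making $F$ continuous so that the equality $F[w]=\beta$ passes to the limit). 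Nonemptyness of $\mmab^1$ when $\beta>\beta_\alpha$ is provided by Lemma~\ref{le:b1}.

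\textbf{Euler--Lagrange and the role of \eqref{a0}.} At a minimizer $v_\tau^{n+1}$ one should find Lagrange multipliers $A_\tau^{n+1},B_\tau^{n+1}\in\RR$ such that
\[
\frac{v_\tau^{n+1}-v_\tau^n}{\tau} = \Delta\mu_\tau^{n+1} - W''(v_\tau^{n+1})\mu_\tau^{n+1} + A_\tau^{n+1} + B_\tau^{n+1}\mu_\tau^{n+1},\qquad \mu_\tau^{n+1} := -\Delta v_\tau^{n+1}+W'(v_\tau^{n+1}).
\]
The constraint gradients at $v_\tau^{n+1}$ are the constant $1$ and $\mu_\tau^{n+1}$, which are linearly independent in $L^2$ precisely when $v_\tau^{n+1}\notin\mzab$, a property guaranteed by hypothesis~\eqref{a0}. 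Testing the equation against $1/|\Omega|$ and against $\mu_\tau^{n+1}$ yields the discrete analogues of \eqref{a1}--\eqref{a2}, from which $A_\tau^{n+1}$ and $B_\tau^{n+1}$ are computed explicitly, with $\|\mu_\tau^{n+1}-\overline{\mu_\tau^{n+1}}\|_2^2$ appearing in the denominator of $B_\tau^{n+1}$.

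\textbf{A priori estimates and the lower bound \eqref{a3}.} The standard comparison $J_\tau^n[v_\tau^{n+1}]\le J_\tau^n[v_\tau^n]$ yields the dissipation inequality $\tfrac{1}{2\tau}\|v_\tau^{n+1}-v_\tau^n\|_2^2 + E[v_\tau^{n+1}] \le E[v_\tau^n]$. Telescoping produces a uniform bound on $E[v_\tau^n]$, hence on $\|\mu_\tau^n\|_2$, together with a discrete $H^1(0,T;L^2)$ bound on the piecewise-linear interpolant. Elliptic regularity for $-\Delta w + W'(w)=\mu$ under Neumann boundary conditions then upgrades this to a uniform $L^\infty(0,T;H^2(\Omega))$ bound on $v_\tau$. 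To control $B_\tau^{n+1}$ in $L^2(0,T)$, one needs $\|\mu_\tau^{n+1}-\overline{\mu_\tau^{n+1}}\|_2$ bounded away from $0$ on the energy sublevel set $\{w\in\mmab^2:E[w]\le E[v_0]\}$. This is obtained by a compactness-contradiction argument: any sequence in this sublevel set is precompact in $H^1$ by Rellich, any accumulation point still lies in $\mmab^2$, and if $\|\mu_k-\overline{\mu_k}\|_2\to 0$ along a subsequence the limit belongs to $\mzab$, contradicting~\eqref{a0}. This yields a positive constant $\varepsilon=\varepsilon(E[v_0])$ which, carried over to the continuous interpolant, supplies~\eqref{a3}.

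\textbf{Limit passage, uniqueness, main obstacle.} With the above estimates, $A_\tau$ and $B_\tau$ are bounded in $L^2(0,T)$, and Aubin--Lions compactness permits passage to the limit $\tau\to 0$ in the discrete equation, in the discrete forms of \eqref{a1}--\eqref{a2} and in the two constraints, producing a solution in the announced class. Uniqueness follows by an energy estimate on the difference of two solutions: the $L^\infty(0,T;H^2)$ bound controls the nonlinear terms, \eqref{a1}--\eqref{a2} together with \eqref{a3} show that the Lagrange multipliers associated to both solutions differ by quantities controlled by the difference itself, and Gronwall's lemma closes the argument. The main obstacle is precisely the estimate of $B$: the two constraint gradients collide exactly on $\mzab$ and the Lagrange system degenerates there, so the compactness argument based on \eqref{a0} is indispensable to extract a quantitative nondegeneracy uniform in $\tau$ and surviving the limit $\tau\to 0$.
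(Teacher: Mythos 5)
Your overall architecture coincides with the paper's: the same minimizing-movements scheme, Lagrange multipliers whose existence hinges on the non-constancy of $\mu$ guaranteed by \eqref{a0}, a compactness--contradiction argument exploiting $\mzab=\emptyset$ on the energy sublevel set, and a Gronwall argument for uniqueness. However, there is a genuine gap at the single most delicate point, the a priori estimate of $B_\tau^{n+1}$. You propose to read $B_\tau^{n+1}$ off the discrete analogue of \eqref{a2}, i.e.\ to test the Euler--Lagrange equation with $\mu_\tau^{n+1}$ and divide by $\|\mu_\tau^{n+1}-\overline{\mu_\tau^{n+1}}\|_2^2$, the denominator being bounded below by compactness. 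But the resulting numerator contains $\|\nabla\mu_\tau^{n+1}\|_2^2$ with a sign that does not help when $B_\tau^{n+1}>0$, and this quantity is \emph{not} controlled by the energy bound $E[v_\tau^{n+1}]\le E[v_0]$, which only gives $\|\mu_\tau^{n+1}\|_2\le C$. Bounding $\|\nabla\mu_\tau^{n+1}\|_2$ (or $\|\Delta\mu_\tau^{n+1}\|_2$) from the equation itself requires a bound on $B_\tau^{n+1}$ first, so the argument is circular. This is precisely why the paper does not estimate $B$ from \eqref{a2}: that identity is only derived a posteriori, at the continuous level, by differentiating $F[v(t)]=\beta$ in time once $\mu\in L^2(0,T;H^2_N(\Omega))$ is already known.

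The paper's way out is to test the Euler--Lagrange equation with $\mathcal{N}\bigl(\mu_\tau^{n+1}-\overline{\mu_\tau^{n+1}}\bigr)$, where $\mathcal{N}$ is the inverse Neumann Laplacian on mean-free functions. This converts the troublesome term into $\langle -\Delta\mu,\mathcal{N}(\mu-\overline{\mu})\rangle_2=\|\mu-\overline{\mu}\|_2^2$, which \emph{is} controlled by $E[v_0]$, and turns the denominator into $\|\nabla\mathcal{N}(\mu-\overline{\mu})\|_2^2$, essentially the $H^1(\Omega)'$-norm of $\mu-\overline{\mu}$. Accordingly, the nondegeneracy you must prove on the sublevel set $\{w\in\mmab^2: E[w]\le E[v_0]\}$ is the positivity of this \emph{weaker} norm (Lemma~\ref{le:b3}(2) in the paper), not merely of $\|\mu-\overline{\mu}\|_2$; your compactness argument does adapt, since the weak $L^2$-limit of $\mu_k-\overline{\mu_k}$ is identified through the strong $H^1(\Omega)'$-convergence. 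With this modification one gets $|B_\tau^{n+1}|\le C\bigl(1+\|v_\tau^{n+1}-v_\tau^n\|_2/\tau\bigr)$, which the dissipation inequality places in $L^2(0,T)$, and only then does the equation yield the $L^2(0,T;H^2)$ bound on $\mu^\tau$. Two smaller points you gloss over: the derivation of the Euler--Lagrange equation itself requires constructing perturbations that stay exactly on the nonlinear constraint set $F=\beta$ (the paper does this via a regularization and the mean-value theorem, and must also prove $\mu\in H^2_N(\Omega)$ by duality before writing the strong form); and in the limit $\tau\to0$ the product $B^{\tau}\mu^{\tau}$ is a weak-times-weak limit, identified only because $B^\tau$ is spatially constant and $\mu^\tau\to\mu$ strongly in $L^p(0,T;H^1(\Omega)')$.
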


Observe that the identity \eqref{a2} defining $B$ is meaningless if $\mu(t_0)$ is a constant at some time $t_0$, that is, if $v(t_0)\in\mzab$. The main purpose of the assumption \eqref{a0} is then to prevent this situation to occur.

A further consequence of our analysis is the time monotonicity of the free energy along the flow which is a natural outcome of the gradient flow structure of \eqref{pf1}-\eqref{pf5}. 

\begin{corollary}\label{co:a2} Under the assumptions and notations of Theorem~\ref{th:a1}, the map 
$$
t\longmapsto E[v(t)]= \frac{1}{2}\ \|\mu(t)\|_2^2 \;\;\mbox{  is non-increasing.}
$$
\end{corollary}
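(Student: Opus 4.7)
The corollary is the dissipation identity for the constrained $L^2$-gradient flow \eqref{a4}: since $\partial_t v$ preserves both the volume and area constraints, the Lagrange multiplier terms $A$ and $B\mu$ are $L^2$-orthogonal to it, and the remaining contribution of the right-hand side of \eqref{a4} accounts exactly for $\dot E = -\|\partial_t v\|_2^2$. The plan is to derive this identity rigorously from \eqref{a4} together with the conservation of $\overline{v}$ and $F[v]$.

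The first step is the chain rule
\[
\frac{d}{ds}\,\frac{1}{2}\|\mu(s)\|_2^2 = \int_\Omega \partial_s v\,\bigl(-\Delta\mu + W''(v)\mu\bigr)\,dx \quad \text{for a.e.~} s\in(0,t).
\]
From Theorem~\ref{th:a1} we have $\mu\in L^2(0,t;H_N^2(\Omega))$ and $v\in\mathcal{C}([0,t]\times\overline{\Omega})\cap L^\infty(0,t;H^2(\Omega))$, and \eqref{a4} combined with $W''(v)\in L^\infty((0,t)\times\Omega)$ and $A,B\in L^2(0,t)$ yields $\partial_s v\in L^2((0,t)\times\Omega)$. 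Differentiating $\mu=-\Delta v+W'(v)$ in time then produces $\partial_s\mu=-\Delta\partial_s v+W''(v)\partial_s v\in L^2(0,t;(H_N^2)')$, where the Laplacian is interpreted by duality against $H_N^2$ thanks to $\nabla\mu\cdot\mathbf{n}=0$. A standard Gelfand-triple argument ($H_N^2\hookrightarrow L^2\hookrightarrow(H_N^2)'$) delivers the displayed identity. Substituting \eqref{a4} in the form $-\Delta\mu+W''(v)\mu=-\partial_s v+A+B\mu$, we obtain
\[
\frac{d}{ds}E[v(s)] = -\|\partial_s v\|_2^2 + A(s)\int_\Omega \partial_s v\,dx + B(s)\int_\Omega \mu\,\partial_s v\,dx.
\]

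Both remaining integrals vanish along the trajectory. Differentiating $\overline{v(s)}=\overline{v_0}$ gives $\int_\Omega \partial_s v\,dx = 0$ for a.e.~$s$. For the second, the chain rule for $F$, combined with $\nabla v\cdot\mathbf{n}=0$, yields, for $0\le t_1<t_2\le t$,
\[
F[v(t_2)] - F[v(t_1)] = \int_{t_1}^{t_2}\!\!\int_\Omega \partial_s v\,(-\Delta v+W'(v))\,dx\,ds = \int_{t_1}^{t_2}\!\!\int_\Omega \mu\,\partial_s v\,dx\,ds,
\]
so the area constraint $F[v(s)]=F[v_0]$ forces $\int_\Omega \mu(s)\,\partial_s v(s)\,dx=0$ for a.e.~$s\in(0,t)$. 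Consequently $\frac{d}{ds}E[v(s)] = -\|\partial_s v(s)\|_2^2 \le 0$ a.e., and integration over any subinterval of $(0,t)$ gives the monotonicity claim.

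The delicate point is the first-displayed chain rule: it rests on the fact that $\partial_t\mu\in L^2(0,t;(H_N^2)')$, which together with $\mu\in L^2(0,t;H_N^2)$ gives $\mu\in\mathcal{C}([0,t];L^2(\Omega))$ and the required differentiation formula via the classical Lions--Magenes result for Gelfand triples. Everything after that is algebraic manipulation using \eqref{a4} and the two conservation laws, so this regularity bookkeeping is the only nontrivial ingredient in the argument.
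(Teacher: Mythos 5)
Your proof is correct, but it follows a different route from the one the paper's construction most directly supplies. The paper states the corollary without an explicit proof; the natural argument from its analysis is the \emph{discrete} energy monotonicity \eqref{d24}, $E[v^\tau(t_2)]\le E[v^\tau(t_1)]\le E[v_0]$, which comes from comparing the minimizer $v_{n+1}^\tau$ of $\mf_{\tau,v_n^\tau}$ with the competitor $v_n^\tau$. Passing to the limit $\tau_k\to 0$ with the pointwise-in-time weak $H^2$-convergence \eqref{d28}--\eqref{d29} and weak lower semicontinuity of $w\mapsto\|-\Delta w+W'(w)\|_2^2$ yields $E[v(t)]\le E[v_0]$; to upgrade this to monotonicity between two arbitrary times $t_1<t_2$ one restarts the scheme from $v(t_1)\in\mmab^2$ and invokes the uniqueness part of Theorem~\ref{th:a1} (the naive passage to the limit in $E[v^{\tau_k}(t_2)]\le E[v^{\tau_k}(t_1)]$ fails because only a $\liminf$ bound is available at $t_1$). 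You instead prove the continuous dissipation identity $\frac{d}{ds}E[v(s)]=-\|\partial_s v(s)\|_2^2$ directly from \eqref{a4} and the two conservation laws, which is a strictly stronger statement than the corollary and works entirely at the level of the limit solution. The regularity bookkeeping you flag is indeed the only delicate point, and it holds: $\partial_t v\in L^2((0,t)\times\Omega)$ follows from \eqref{a4} together with $\mu\in L^2(0,t;H^2_N(\Omega))\cap L^\infty(0,t;L^2(\Omega))$, $A,B\in L^2(0,t)$ and the boundedness of $v$; the Gelfand-triple chain rule for $s\mapsto\frac12\|\mu(s)\|_2^2$ and the identities $\int_\Omega\partial_s v\,dx=0$ and $\int_\Omega\mu\,\partial_s v\,dx=0$ are exactly the computations the paper itself performs in Section~\ref{sec:exist} to derive \eqref{a1} and \eqref{a2}, so nothing beyond the paper's own toolbox is needed. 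In short: both arguments are valid; the paper's implicit route is more economical given the construction already in hand, while yours is self-contained given only the statement of Theorem~\ref{th:a1} and yields the sharper energy identity.
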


The outline of the paper is as follows: in the next section, we collect some preliminary results concerning the structure of $\mmab^i$, $i=1,2$, some functional lower bounds on $F$ and $E$, and the fact that $\mzab$ is indeed empty for $\beta$ large enough, along with a useful functional inequality in that case. Section~\ref{sec:min} describes the minimizing scheme for one time step. Estimates are also derived there, allowing us to pass to the limit as the time step decreases to zero and obtain the existence part of Theorem~\ref{th:a1} in Section~\ref{sec:exist}. The uniqueness part of Theorem~\ref{th:a1} is proved in Section~\ref{sec:uniq} and the proof heavily relies on the positivity property \eqref{a3} which allows us to control the difference between the area Lagrange multipliers of the two solutions.

\section{Preliminaries}\label{sec:prep}

Let us first show that $\mathcal{M}_{\alpha,\beta}^i$, $i=1,2$, is quite large when $\alpha\in\RR$ and $\beta\in (\beta_\alpha,\infty)$ as claimed in the Introduction.

\begin{lemma}\label{le:b1}
Consider $\alpha\in\RR$. There is at least a function $w_\alpha\in H^2_N(\Omega)$ such that $\overline{w_\alpha}=\alpha$ and $F[w_\alpha]=\beta_\alpha$. In addition, if $\beta\in (\beta_\alpha,\infty)$ and $\varphi\in H^1(\Omega)$ (resp.~$\varphi\in H^2_N(\Omega)$) satisfies $\overline{\varphi}=0$ and $\varphi\not\equiv 0$, then there is $\lambda>0$ depending on $\varphi$ such that $w_\alpha + \lambda \varphi \in \mathcal{M}_{\alpha,\beta}^1$ (resp. $w_\alpha + \lambda \varphi \in \mathcal{M}_{\alpha,\beta}^2$).
\end{lemma}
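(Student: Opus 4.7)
The plan is to split the proof into two independent tasks: attainment of $\beta_\alpha$ by a function already in $H^2_N(\Omega)$, and then a one-parameter perturbation argument to reach each prescribed $\beta>\beta_\alpha$.

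\emph{Step 1: minimizer in $H^2_N(\Omega)$.} I would apply the direct method of the calculus of variations to the infimum \eqref{b1}. Pick a minimizing sequence $(w_n)_{n\ge 1}\subset H^1(\Omega)$ with $\overline{w_n}=\alpha$ and $F[w_n]\to\beta_\alpha$. From $F[w_n]\ge \tfrac12\|\nabla w_n\|_2^2$ and $\overline{w_n}=\alpha$, the Poincar\'e--Wirtinger inequality yields boundedness of $(w_n)$ in $H^1(\Omega)$, so up to extraction $w_n\rightharpoonup w_\alpha$ weakly in $H^1(\Omega)$ and strongly in $L^4(\Omega)$ by Rellich--Kondrachov; the latter controls the quartic $W$-term, while weak lower semicontinuity handles the Dirichlet part and the linear constraint passes to the limit, so $\overline{w_\alpha}=\alpha$ and $F[w_\alpha]=\beta_\alpha$. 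To upgrade the regularity, I would differentiate $F[w_\alpha+s\varphi]$ at $s=0$ along variations $\varphi\in H^1(\Omega)$ with $\overline{\varphi}=0$, producing the weak identity
\[ \int_\Omega \nabla w_\alpha\cdot\nabla\varphi\,dx + \int_\Omega W'(w_\alpha)\,\varphi\,dx = 0, \]
which forces $-\Delta w_\alpha + W'(w_\alpha)$ to be a.e.\ equal to a constant (the Lagrange multiplier for the average constraint), necessarily $\overline{W'(w_\alpha)}$ by integration, together with the natural boundary condition $\nabla w_\alpha\cdot\mathbf{n}=0$ on $\partial\Omega$. Since $W'(w_\alpha)\in L^2(\Omega)$ by Sobolev embedding, standard elliptic regularity for the Neumann Laplacian yields $w_\alpha\in H^2_N(\Omega)$, as required.

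\emph{Step 2: perturbation to reach $\beta$.} For $\varphi$ as in the statement, I would set $g(\lambda):=F[w_\alpha+\lambda\varphi]$ for $\lambda\ge 0$. Since $\overline{\varphi}=0$, the volume constraint is automatically preserved along the family $w_\alpha+\lambda\varphi$. Expanding $W(r)=\tfrac{a}{4}(r^2-1)^2$ with $r=w_\alpha+\lambda\varphi$ makes $g$ a continuous polynomial of degree $4$ in $\lambda$ whose leading coefficient equals $\tfrac{a}{4}\|\varphi\|_4^4>0$ (using $\varphi\not\equiv 0$); thus $g(0)=\beta_\alpha<\beta$ and $g(\lambda)\to+\infty$ as $\lambda\to\infty$. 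The intermediate value theorem then supplies some $\lambda>0$ with $g(\lambda)=\beta$, giving $w_\alpha+\lambda\varphi\in\mmab^1$, and a fortiori in $\mmab^2$ when $\varphi\in H^2_N(\Omega)$. The only genuinely delicate step I anticipate is the $H^2_N$ regularity of the minimizer in Step~1, which hinges on correctly identifying the constant Lagrange multiplier attached to the scalar average constraint and then invoking Neumann elliptic regularity; boundedness of the minimizing sequence, weak lower semicontinuity, continuity of $g$, and its coercivity at infinity all follow by direct computation from the explicit quartic form of $W$.
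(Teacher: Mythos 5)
Your proposal is correct and follows essentially the same route as the paper: direct method plus the Euler--Lagrange identity and Neumann elliptic regularity to get $w_\alpha\in H^2_N(\Omega)$, then continuity and coercivity of $\lambda\mapsto F[w_\alpha+\lambda\varphi]$ combined with the intermediate value theorem. The only (cosmetic) difference is that you obtain coercivity from the quartic leading term $\tfrac{a}{4}\|\varphi\|_4^4\lambda^4$ of $W$, whereas the paper drops the potential term and uses the Dirichlet part $\tfrac{\lambda^2}{2}\|\nabla\varphi\|_2^2$ (nonzero since $\overline{\varphi}=0$ and $\varphi\not\equiv 0$ preclude $\varphi$ being constant); both are valid.
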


\begin{proof}
Let $\alpha\in\RR$. The existence of $w_\alpha$ follows from the nonnegativity and weak lower semicontinuity of $F$ by classical arguments of the theory of the calculus of variations; moreover, $w_\alpha$ solves the Euler-Lagrange variational identity associated with $F$, that is, 
$$
\int_\Omega \nabla w_\alpha \cdot \nabla z \, dx + \int_\Omega 
\left( W'( w_\alpha  ) - \overline{W'( w_\alpha  ) }\right) z \, dx =0 
\quad \text{ for all }\;\; z\in H^1(\Omega), 
$$
whence $w_\alpha\in H^2_N(\Omega)$. Consider next $\beta\in (\beta_\alpha,\infty)$ and a function $\varphi$ in either $H^1(\Omega)$ or $H_N^2(\Omega)$ such that $\overline{\varphi}=0$ and $\varphi\not\equiv 0$. Introducing the function $f$ defined by $f(\lambda):= F[w_\alpha + \lambda \varphi]$ for $\lambda\ge 0$, we realize that $f$ is a continuous function in $[0,\infty)$ with $f(0)=F[w_\alpha]=\beta_\alpha<\beta$ and, thanks to the nonnegativity of $W$, 
$$
f(\lambda) \ge \frac{1}{2} \left( \|\nabla w_\alpha\|_2^2 + \lambda^2 \|\nabla\varphi\|_2^2 + 2\lambda \int_\Omega \nabla w_\alpha \cdot \nabla\varphi\ dx \right) \mathop{\longrightarrow}_{\lambda\to\infty} \infty\,. 
$$
The mean-value theorem then guarantees that there is at least $\lambda_\varphi>0$ such that $f(\lambda_\varphi)=\beta$, that is, $w_\alpha + \lambda_\varphi \varphi\in \mathcal{M}_{\alpha,\beta}^i$ for either $i=1$ or $i=2$.
\end{proof}

We next show that the functionals $F$ and $E$ control the $H^1$-norm and the $H^2$-norm, respectively.

\begin{lemma}\label{le:b2}
Given $\alpha\in\RR$, there is $C_1>0$ such that
\begin{align}
\|w\|_{H^1} & \le C_1 \left( 1 + \sqrt{F[w]} \right) \;\;\mbox{ for }\;\; w\in H^1(\Omega) \;\;\mbox{ such that }\;\; \overline{w}=\alpha\,, \label{b6} \\
\|w\|_{H^2} & \le C_1 \left( 1 + \sqrt{E[w]} \right) \;\;\mbox{ for }\;\; w\in H_N^2(\Omega) \;\;\mbox{ such that }\;\; \overline{w}=\alpha\,. \label{b7}
\end{align}
\end{lemma}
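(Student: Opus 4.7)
My plan is to prove \eqref{b6} and \eqref{b7} separately, with \eqref{b7} leveraging \eqref{b6}. Estimate \eqref{b6} follows almost immediately: the nonnegativity of $W$ gives $\|\nabla w\|_2^2 \leq 2 F[w]$, while the Poincar\'e--Wirtinger inequality and the constraint $\overline{w} = \alpha$ yield $\|w - \alpha\|_2 \leq C_P \|\nabla w\|_2$. Applying the triangle inequality then produces $\|w\|_{H^1} \leq C(|\alpha| + \sqrt{F[w]})$, whence \eqref{b6} with $C_1$ depending on $\alpha$ and $\Omega$.

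For \eqref{b7} the strategy has three steps. First I will establish the intermediate bound $F[w] \leq C(1 + E[w])$. Multiplying $\mu = -\Delta w + W'(w)$ by $w$ and integrating by parts (the boundary term vanishes thanks to $w \in H_N^2(\Omega)$) gives
\begin{equation*}
\int_\Omega \mu\, w \, dx = \|\nabla w\|_2^2 + \int_\Omega W'(w)\, w \, dx.
\end{equation*}
The elementary identity $W'(r) r = 4 W(r) + a(r^2 - 1)$, which is specific to the quartic $W$, combined with $W \geq 0$ leads to $\int_\Omega \mu\, w \, dx \geq 2 F[w] - a|\Omega|$. Cauchy--Schwarz, \eqref{b6}, and Young's inequality then absorb $F[w]$ on the left to produce $F[w] \leq C(1 + \|\mu\|_2^2) = C(1 + 2 E[w])$.

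Second, I will bound $\|\Delta w\|_2$. Testing $\mu = -\Delta w + W'(w)$ against $-\Delta w$ and integrating by parts (using $\nabla(W'(w)) = W''(w)\nabla w$ and the Neumann condition once more) yields
\begin{equation*}
\|\Delta w\|_2^2 = -\int_\Omega \mu\, \Delta w \, dx - \int_\Omega W''(w)\, |\nabla w|^2 \, dx.
\end{equation*}
The crucial ingredient here is the uniform pointwise lower bound $W''(r) = a(3 r^2 - 1) \geq -a$, which dominates the second term by $a \|\nabla w\|_2^2$. Young's inequality on the first term then gives $\|\Delta w\|_2^2 \leq \|\mu\|_2^2 + 2 a \|\nabla w\|_2^2 \leq C(1 + E[w])$ via the previous step.

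Third, I will invoke standard $H^2$ Neumann elliptic regularity, $\|w\|_{H^2} \leq C(\|\Delta w\|_2 + \|w\|_{H^1})$ for $w \in H_N^2(\Omega)$, and combine it with \eqref{b6} and the two preceding bounds to conclude \eqref{b7}. I expect the second step to be the delicate one: a direct approach via $\|\Delta w\|_2 \leq \|\mu\|_2 + \|W'(w)\|_2$ combined with $H^1 \hookrightarrow L^6$ would only yield $\|W'(w)\|_2 \leq C(1 + \|w\|_{H^1}^3)$, far too weak to match the linear-in-$\sqrt{E[w]}$ dependence required. The integration-by-parts trick bypasses this by exploiting the favourable sign of the quadratic part of $W''$.
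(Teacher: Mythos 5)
Your proof is correct and follows essentially the same route as the paper: the paper also tests $\mu$ against $w-\alpha$ (rather than $w$) together with Poincar\'e--Wirtinger and Young to control the $H^1$-norm by $1+E[w]$, and then handles $\|\Delta w\|_2$ by exploiting that $r\mapsto W'(r)+ar$ is non-decreasing, which is exactly your test against $-\Delta w$ with the bound $W''\ge -a$ in a slightly different guise. Your intermediate identity $W'(r)r=4W(r)+a(r^2-1)$ and the passage through $F[w]\le C(1+E[w])$ are cosmetic variations that change nothing of substance.
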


\begin{proof}
Let $w\in H^1(\Omega)$ with $\overline{w}=\alpha$. We readily infer from the Poincar\'e-Wirtinger inequality
\begin{equation}
\|w -\overline{w}\|_2 \le C_2\ \|\nabla w\|_2\,, \label{b8}
\end{equation}
and the nonnegativity of $W$ that 
\begin{align*}
\|w\|_{H^1}^2 & \le 2 \left( \|w -\overline{w}\|_2^2 + \|\overline{w}\|_2^2 \right) + \|\nabla w\|_2^2 \le (1+2C_2)\ \|\nabla w\|_2^2 + 2\alpha^2 |\Omega| \\
& \le (2+4C_2)\ F[w] + 2\alpha^2 |\Omega|\,,
\end{align*}
whence \eqref{b6} follows. Next, let $w\in H_N^2(\Omega)$ with $\overline{w}=\alpha$ and set $\mu:= - \Delta w + W'(w)$. The definition \eqref{b4} of $E$ entails $\|\mu\|_2^2=2E[w]$ so that $\mu\in L^2(\Omega)$ and
$$
\|w-\alpha\|_2\ \|\mu\|_2 \ge \int_\Omega (w-\alpha)\ \mu\ dx  = \|\nabla w\|_2^2 + a\ \int_\Omega (w-\alpha)\ (w^3 - w)\ dx \ge \|\nabla w\|_2^2 -C\,.
$$
Thanks to \eqref{b8} and Young's inequality, we further obtain
$$
\|\nabla w\|_2^2  \le C + C_2\ \|\nabla w\|_2\ \|\mu\|_2 \le \frac{\|\nabla w\|_2^2}{2} + C \left(  1 + \|\mu\|_2^2 \right) \,,
$$
and consequently
$$
\|\nabla w\|_2^2  \le C (1 + E[w])\,.
$$
Using once more \eqref{b8} gives
\begin{equation}
\|w\|_{H^1}^2 \le C \left( 1 + E[w] \right)\,. \label{b9}
\end{equation}
Finally, observing that $r\mapsto W'(r) + a r$ is non-decreasing, it follows from the definition of $\mu$ that $w$ solves 
$$
-\Delta w + W'(w) + a w = \mu + a w \;\;\mbox{ in }\;\; \Omega
$$
with homogeneous Neumann boundary conditions, and a classical monotonicity argument ensures that
$$
\|\Delta w \|_2 \le \|\mu + a w \|_2 \le \|\mu\|_2 + a\ \|w\|_2\,.
$$
Combining this estimate with \eqref{b9} readily gives \eqref{b7}.
\end{proof}

The last result of this section is devoted to the set $\mathcal{Z}_{\alpha,\beta}$ defined in \eqref{b3}. We prove in particular another fact claimed in the Introduction, namely that, given $\alpha\in\RR$, the set $\mathcal{Z}_{\alpha,\beta}$ is empty at least for $\beta$ large enough, so that Theorem~\ref{th:a1} can be applied in that case. Throughout the paper, we use the following notation: given $w\in L^2(\Omega)$ satisfying $\overline{w}=0$, the function $\mathcal{N}(w)\in H_N^2(\Omega)$ is the unique solution to 
\begin{equation}
-\Delta\mathcal{N}(w) = w \;\;\mbox{ in }\;\;\Omega\,, \qquad \nabla\mathcal{N}(w)\cdot\mathbf{n} = 0 \;\;\mbox{ on }\;\;\partial\Omega\,, \quad\mbox{ satisfying }\;\; \overline{\mathcal{N}(w)}=0\,. \label{b10}
\end{equation}

\begin{lemma}\label{le:b3}
Consider $\alpha\in\RR$ and set $\mathcal{O}_\alpha := \{\beta \in (\beta_\alpha,\infty)\ :\ \mathcal{Z}_{\alpha,\beta}=\emptyset\}$. 
\begin{enumerate}
\item The set $\mathcal{O}_\alpha$ is open and $\beta\in\mathcal{O}_\alpha$ if $\beta$ is large enough.
\item Assume that $\beta\in\mathcal{O}_\alpha$. Given $M>0$, it turns out that
\begin{equation}
m_M := \inf\left\{ \left\| \nabla\mathcal{N}\left( -\Delta v + W'(v) - \overline{W'(v)} \right) \right\|_2^2\ :\ v\in \mathcal{M}_{\alpha,\beta}^2\,, \ E[v]\le M \right\} > 0\,. \label{b11}
\end{equation}
\end{enumerate}
\end{lemma}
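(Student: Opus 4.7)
For the emptiness claim in part~(1), I would take any $v\in\mzab$ and test the identity $-\Delta v + W'(v) = \overline{W'(v)}$ against the mean-zero function $v-\alpha$; since the right-hand side is constant, integration by parts yields
$$
\|\nabla v\|_2^2 + a\|v\|_4^4 = a\alpha\int_\Omega v^3\,dx + a\|v\|_2^2 - a\alpha^2|\Omega|\,.
$$
H\"older's and Young's inequalities allow to absorb the three terms on the right-hand side into a fraction of $a\|v\|_4^4$ at the cost of an $\alpha$-dependent additive constant, producing a bound $\|v\|_4^4 + \|\nabla v\|_2^2 \le C(\alpha)$ that is \emph{independent} of $\beta$. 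Consequently $F[v]\le C'(\alpha)$ for every $v\in\mzab$, and $\mzab=\emptyset$ as soon as $\beta > C'(\alpha)$.

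For openness of $\mathcal{O}_\alpha$, I would argue by contradiction: suppose $\beta_0\in\mathcal{O}_\alpha$ and there exist $\beta_n\to\beta_0$ with $v_n\in\mathcal{Z}_{\alpha,\beta_n}$. The bound $F[v_n]=\beta_n$ combined with Lemma~\ref{le:b2} gives $\|v_n\|_{H^1}\le C$, so Sobolev embedding controls $W'(v_n) = a(v_n^3-v_n)$ in $L^2(\Omega)$. Since $\mu_n:=-\Delta v_n+W'(v_n)$ equals the constant $\overline{W'(v_n)}$, the sequence $\mu_n$ is bounded in $L^2$, so $E[v_n]$ is bounded and a second application of Lemma~\ref{le:b2} yields $\|v_n\|_{H^2}\le C$. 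Extracting a subsequence converging weakly in $H^2(\Omega)$ and strongly in $H^1(\Omega)$ (hence in $L^4(\Omega)$ by Sobolev embedding), I can pass to the limit in the constraints and in the Euler--Lagrange equation to produce a limit $v\in\mathcal{Z}_{\alpha,\beta_0}$, a contradiction.

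For the positivity statement in part~(2), a similar contradiction scheme works. If $m_M=0$, I pick $v_n\in\mmab^2$ with $E[v_n]\le M$ and $\|\nabla\mn(\mu_n-\overline{\mu_n})\|_2\to 0$, where $\mu_n:=-\Delta v_n+W'(v_n)$. The $E$-bound and Lemma~\ref{le:b2} give $\|v_n\|_{H^2}\le C$, so a subsequence converges weakly in $H^2$ and strongly in $H^1$ and $L^4$ to some $v\in\mmab^2$, with $\mu_n\rightharpoonup\mu:=-\Delta v+W'(v)$ in $L^2(\Omega)$. The decisive step is to identify $\mu$ with $\overline{\mu}$: for every $\varphi\in H^1(\Omega)$, the definition of $\mn$ and integration by parts give
$$
\int_\Omega (\mu_n-\overline{\mu_n})\,\varphi\,dx = \int_\Omega \nabla\mn(\mu_n-\overline{\mu_n})\cdot\nabla\varphi\,dx \longrightarrow 0\,,
$$
so $\mu_n-\overline{\mu_n}\to 0$ in the dual space of $H^1(\Omega)$. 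Combined with its weak $L^2$ convergence to $\mu-\overline{\mu}$, this forces $\mu\equiv\overline{\mu}$ a.e.\ in $\Omega$, i.e.\ $v\in\mzab$, contradicting $\beta\in\mathcal{O}_\alpha$.

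The main technical obstacle in both compactness arguments is promoting the weak $H^2$-bounds provided by Lemma~\ref{le:b2} into enough strong convergence to pass to the limit in the cubic nonlinearity $W'(v_n)$ and in the area functional $F$; Sobolev embedding in dimension at most three handles this. The only genuinely new ingredient (beyond the openness proof) is the reconciliation of two different modes of convergence of $\mu_n-\overline{\mu_n}$ used in part~(2) to identify its $L^2$-weak limit as zero.
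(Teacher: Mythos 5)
Your proof is correct and follows essentially the same route as the paper: the openness and the positivity of $m_M$ are both obtained by the same compactness-and-contradiction scheme (bootstrapping the $H^1$ bound to an $H^2$ bound via Lemma~\ref{le:b2} and identifying the weak $L^2$ limit of $\mu_n-\overline{\mu_n}$ as zero through its strong convergence in $H^1(\Omega)'$, which is exactly how the paper uses $m_M=0$). The only cosmetic difference is in the emptiness argument, where you test the equation against $v-\alpha$ and absorb terms by Young's inequality, while the paper tests against $v$ and uses the pointwise bound $r^4-2\alpha r^3\ge -27\alpha^4/16$; both yield a $\beta$-independent bound on $F[v]$ for $v\in\mzab$ and hence the same conclusion.
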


\begin{proof} (1) Let us first show that $(\beta_\alpha,\infty)\setminus \mathcal{O}_\alpha$ is closed. Consider a sequence $(\beta_n)_{n\ge 1}$ in $(\beta_\alpha,\infty)$ such that $\beta_n\not\in\mathcal{O}_\alpha$ for each $n\ge 1$ and $\beta_n\to\beta \in (\beta_\alpha,\infty)$ as $n\to\infty$. Then, according to the definition of $\mathcal{O}_\alpha$, for each $n\ge 1$, there is $w_n\in \mathcal{M}_{\alpha,\beta_n}^2$ such that $-\Delta w_n + W'(w_n) - \overline{W'(w_n)} = 0$ in $\Omega$. Since $(\beta_n)_{n\ge 1}$ is bounded, it follows from Lemma~\ref{le:b2} that $(w_n)_{n\ge 1}$ is bounded in $H^1(\Omega)$. Furthermore, the properties of $w_n$ and the continuous embedding of $H^1(\Omega)$ in $L^3(\Omega)$ entail that
$$
E[w_n] = 2 \left\| \overline{W'(w_n)} \right\|_2^2 \le C \left( 1 + \|w_n\|_3^6 \right) \le C \left( 1 + \|w_n\|_{H^1}^6 \right)\,,
$$
so that $(E[w_n])_{n\ge 1}$ is bounded. A further application of Lemma~\ref{le:b2} ensures that $(w_n)_{n\ge 1}$ is bounded in $H^2(\Omega)$. We then deduce from the compactness of the embedding of $H_N^2(\Omega)$ in $H^1(\Omega)\cap\mathcal{C}(\overline{\Omega})$ that there are $w\in H_N^2(\Omega)$ and a subsequence $(w_{n_k})_{k\ge 1}$ of $(w_n)_{n\ge 1}$ such that 
\begin{equation*}
w_{n_k} \longrightarrow w \;\;\mbox{ in }\;\; H^1(\Omega) \;\;\mbox{ and }\;\; \mathcal{C}(\overline{\Omega})\,, \qquad
w_{n_k} \rightharpoonup w \;\;\mbox{ in }\;\; H^2(\Omega) \,.
\end{equation*}
It is then straightforward to check that $w\in \mathcal{M}_{\alpha,\beta}^2$ and satisfies $-\Delta w + W'(w) - \overline{W'(w)} = 0$ in $\Omega$, that is, $w\in\mathcal{Z}_{\alpha,\beta}$. Thus, $\beta\not\in\mathcal{O}_\alpha$.

Consider next $v\in\mathcal{Z}_{\alpha,\beta}$. Then, on the one hand, 
we have
\begin{align*}
\int_\Omega v \left( -\Delta v + W'(v) \right)\ dx &= \|\nabla v\|_2^2 + a\ \int_\Omega v^2 \left( v^2 - 1 \right)\ dx = 2 F(v) + \frac{a}{2}\ \int_\Omega (v^4 - 1)\ dx  \\
&= 2\beta + \frac{a}{2} \left(  \|v\|_4^4 - |\Omega| \right) \,.
\end{align*}
On the other hand, it results that
\begin{equation*}
 \int_\Omega v \left( -\Delta v + W'(v) \right)\ dx  = \overline{W'(v)}\ \int_\Omega v\ dx = \alpha a\ \int_\Omega (v^3-v)\ dx\,.
\end{equation*}
The above two identities give 
$$
2\beta + \frac{a |\Omega|}{2} \left( 2\alpha^2 - 1 \right) + \frac{a}{2}\ \int_\Omega \left( v^4 - 2\alpha\ v^3 \right)\ dx = 0\,.
$$
Since $r^4 - 2 \alpha r^3 \ge - 27 \alpha^4/16$ for $r\in\RR$, we deduce that 
$$
2\beta + \frac{a |\Omega|}{2} \left( 2\alpha^2 - 1 - \frac{27}{16}\ \alpha^4\right) \le 0\,,
$$
which is not possible if $\beta$ is large enough.

(2) Assume for contradiction that $m_M=0$ and let $(v_k)_{k\ge 1}$ be a minimizing sequence. Since $v_k\in\mathcal{M}_{\alpha,\beta}^2$ with $E[v_k]\le M$, it follows from Lemma~\ref{le:b2} that $(v_k)_{k\ge 1}$ is bounded in $H_N^2(\Omega)$ and thus compact in $H^1(\Omega)$ and $\mathcal{C}(\overline{\Omega})$. Therefore, there are $v\in H_N^2(\Omega)$ and a subsequence of $(v_k)_{k\ge 1}$ (not relabeled) such that 
\begin{equation*}
v_k \longrightarrow v \;\;\mbox{ in }\;\; H^1(\Omega) \;\;\mbox{ and }\;\; \mathcal{C}(\overline{\Omega})\,, \qquad
v_k \rightharpoonup v \;\;\mbox{ in }\;\; H^2(\Omega) \,.
\end{equation*}
These convergences readily imply that $\overline{v}=\alpha$ and $F[v]=\beta$, so that $v\in\mathcal{M}_{\alpha,\beta}^2$. In addition,  setting $\nu_k:=-\Delta v_k + W'(v_k)- \overline{W'(v_k)}$ for $k\ge 1$, we also deduce that $(\nu_k)_{k\ge 1}$ converges weakly in $L^2(\Omega)$ towards $\nu := -\Delta v + W'(v) - \overline{W'(v)}$ while the property $m_M=0$ entails that $(\nu_k)_{k\ge 1}$ converges to zero in $H^1(\Omega)'$. Consequently, $\nu=0$, from which we conclude that $v\in\mathcal{Z}_{\alpha,\beta}$ and get a contradiction.
\end{proof}

\section{The minimizing scheme}\label{sec:min}

We fix $\alpha\in\RR$ and $\beta\in (\beta_\alpha,\infty)$. Given $f\in L^2(\Omega)$ and $\tau\in (0,1)$, we introduce the functional $\mf_{\tau,f}$ on $H_N^2(\Omega)$ defined by 
\begin{equation}
\mf_{\tau,f}[w] := \frac{\|w-f\|_2^2}{2} + \tau\ E[w]\,, \qquad w\in H_N^2(\Omega)\,, \label{c0}
\end{equation}
and consider the following minimization problem
\begin{equation}
\omega_{\tau,f} := \inf\left\{ \mf_{\tau,f}[w]\ :\ w\in\mmab^2 \right\}\,. \label{c1}
\end{equation}
Since $E$ is non-negative and $\mmab^2$ is non-empty by Lemma~\ref{le:b1}, $\omega_{\tau,f}$ is well-defined and non-negative.

\begin{lemma}\label{le:c1}
The functional $\mf_{\tau,f}$ has at least a minimizer in $\mmab^2$. In addition, any minimizer $v$ of $\mf_{\tau,f}$ in $\mmab^2$ satisfies
\begin{equation}
E[v]\le E[f]\,. \label{c1b}
\end{equation}
\end{lemma}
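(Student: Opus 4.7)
My plan is to apply the direct method of the calculus of variations. Since $\mmab^2$ is non-empty by Lemma~\ref{le:b1} and any element yields a finite value of $\mf_{\tau,f}$, the infimum $\omega_{\tau,f}$ is finite. Picking a minimizing sequence $(w_n)_{n \ge 1}$ in $\mmab^2$, the non-negativity of both terms in \eqref{c0} gives uniform bounds on $\|w_n - f\|_2$ and on $E[w_n]$. Combined with $\overline{w_n} = \alpha$, Lemma~\ref{le:b2} then yields a uniform $H^2$-bound on $(w_n)$.

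Next I would invoke the compact embedding $H^2(\Omega) \hookrightarrow H^1(\Omega) \cap \mathcal{C}(\overline{\Omega})$, already exploited in Lemma~\ref{le:b3}, to extract a subsequence and find $v \in H^2_N(\Omega)$ such that $w_n \rightharpoonup v$ in $H^2(\Omega)$ and $w_n \to v$ strongly in both $H^1(\Omega)$ and $\mathcal{C}(\overline{\Omega})$. The key step—and the one I regard as the main obstacle, precisely because of the nonlinear area constraint—is verifying that $v \in \mmab^2$. The identity $\overline{v} = \alpha$ follows from strong $L^2$ convergence. For $F[v] = \beta$, strong $H^1$ convergence gives $\|\nabla w_n\|_2 \to \|\nabla v\|_2$, while the uniform convergence $w_n \to v$ in $\mathcal{C}(\overline{\Omega})$ together with the continuity of $W$ yields $\int_\Omega W(w_n)\,dx \to \int_\Omega W(v)\,dx$; hence $F[w_n] \to F[v] = \beta$.

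To conclude the existence proof I need the weak lower semicontinuity of $\mf_{\tau,f}$. The squared $L^2$-distance is convex and continuous, hence weakly lower semicontinuous. For $E$, weak $H^2$ convergence yields $-\Delta w_n \rightharpoonup -\Delta v$ in $L^2(\Omega)$, while uniform convergence of $w_n$ implies $W'(w_n) \to W'(v)$ in $L^2(\Omega)$; consequently $\mu_n := -\Delta w_n + W'(w_n) \rightharpoonup \mu := -\Delta v + W'(v)$ weakly in $L^2(\Omega)$, and lower semicontinuity of the $L^2$-norm under weak convergence delivers $E[v] \le \liminf_{n\to\infty} E[w_n]$. Therefore $\mf_{\tau,f}[v] \le \omega_{\tau,f}$, so $v$ is a minimizer.

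Finally, for the energy bound \eqref{c1b}, the natural idea is to use $f$ itself as a competitor. When $f \in \mmab^2$—the only case in which the inequality carries content, since otherwise one reads $E[f] = +\infty$ by convention—the element $f$ is admissible, so
$$
\tau\, E[v] \le \mf_{\tau,f}[v] \le \mf_{\tau,f}[f] = \tau\, E[f]\,,
$$
and dividing by $\tau > 0$ gives \eqref{c1b}.
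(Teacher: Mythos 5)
Your proof is correct and follows essentially the same route as the paper: the direct method with the $H^2$-bound from Lemma~\ref{le:b2}, the compact embedding into $H^1(\Omega)\cap\mathcal{C}(\overline{\Omega})$ to pass both constraints to the limit, and weak lower semicontinuity of $E$ via $\mu_n\rightharpoonup\mu$ in $L^2(\Omega)$. Your competitor argument for \eqref{c1b} is the intended one (the paper leaves it implicit), and your caveat that it needs $f\in\mmab^2$ — which holds in every application, where $f=v_n^\tau$ — is apt.
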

 
\begin{proof}
For each $k\ge 1$, there is $v_k\in\mmab^2$ such that 
\begin{equation}\label{c2}
\omega_{\tau,f} \le \mf_{\tau,f}[v_k] = \frac{\|v_k-f\|_2^2}{2} + \tau\ E[v_k] \le \omega_{\tau,f} + \frac{1}{k}\,.
\end{equation}
We introduce $\mu_k:=-\Delta v_k + W'(v_k)$ for $k\ge 1$. 
From Lemma~\ref{le:b2} and \eqref{c2} it follows that $(v_k)_{k\ge 1}$ is bounded in $H_N^2(\Omega)$. Since $H^2(\Omega)$ is compactly embedded in $H^1(\Omega)$ and $\mathcal{C}(\overline{\Omega})$, there are $v\in H_N^2(\Omega)$ and a subsequence of $(v_k)_{k\ge 1}$ (not relabeled) such that 
\begin{equation*}
v_k \longrightarrow v \;\;\mbox{ in }\;\; H^1(\Omega) \;\;\mbox{ and }\;\; \mathcal{C}(\overline{\Omega})\,, \qquad
v_k \rightharpoonup v \;\;\mbox{ in }\;\; H^2(\Omega) \,.
\end{equation*}
These convergences readily imply that $\overline{v}=\alpha$, $F[v]=\beta$, and $\mu_k\rightharpoonup \mu:=-\Delta v + W'(v)$ in $L^2(\Omega)$. Consequently, $v\in\mmab^2$ and 
\begin{align*}
\omega_{\tau,f} \le \mf_{\tau,f}[v] = \frac{\|v-f\|_2^2}{2} + 2\tau\ \|\mu\|_2^2 & \le  \lim_{k\to\infty} \, \frac{\|v_k-f\|_2^2}{2}  + \liminf_{k\to\infty} \, 2\tau\ \|\mu_k\|_2^2   \\
& = \liminf_{k\to\infty} \mf_{\tau,f}[v_k] = \omega_{\tau,f}\,.
\end{align*}
We have thus established that $v$ is a minimizer of $\mf_{\tau,f}$ in $\mmab^2$.
\end{proof}

\begin{remark}
\label{ven}
A similar argument shows that the free energy $E$ has at least a minimizer in $\mmab^2$. Constructing a minimizer to the Canham-Helfrich functional 
\eqref{i0} turns out to be far more complicated even without constraints
(see~\cite{Ri08, Si93} and references therein). Existence of axisymmetric minimizers to this functional with volume and area constraints has been recently proved in~\cite{CVxx}.
\end{remark}

The next step is to derive the Euler-Lagrange equation corresponding to the minimization problem \eqref{c1}. At this point, a new difficulty shows up as the constraints are not always independent. Indeed, setting $I[w]:=\overline{w}$, the differentials of $I$ and $F$ are $DI[w] = 1/|\Omega|$ and $DF[w] = - \Delta w + W'(w)$ and are not linearly independent if $-\Delta w + W'(w)$ is a constant. Assuming that this is not the case, we have the following result:

\begin{lemma}\label{le:c2}
Assume that $v\in\mmab^2$ solves the minimization problem \eqref{c1} and is such that $\mu:= -\Delta v + W'(v)$ is not a constant. Then $\mu\in H_N^2(\Omega)$ and there are real numbers $A$ and $B$ such that
\begin{equation}
\frac{v-f}{\tau} - \Delta\mu + W''(v)\ \mu = A + B\ \mu \;\;\mbox{ in }\;\; \Omega\,. \label{c17}
\end{equation}
\end{lemma}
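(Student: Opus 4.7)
The plan is to derive \eqref{c17} by a constrained-minimization (Lagrange multiplier) argument, with the crucial role of the hypothesis that $\mu = -\Delta v + W'(v)$ is not constant being to guarantee that the differentials of the two constraints $I[w]:=\overline{w}=\alpha$ and $F[w]=\beta$ at $v$ are linearly independent.

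I would start by computing the relevant differentials on $H_N^2(\Omega)$: using integration by parts (permitted since $v\in H_N^2(\Omega)$), one gets $DI[v]h = \overline{h}$, $DF[v]h = \int_\Omega \mu h\,dx$, and, recalling that $W''(v)\in L^\infty(\Omega)$ as $v\in H^2(\Omega)\hookrightarrow L^\infty(\Omega)$ for $\Omega\subset\RR^d$ with $d\le 3$,
\[D\mf_{\tau,f}[v]h = \int_\Omega (v-f)\,h\,dx + \tau\int_\Omega \mu\bigl(-\Delta h + W''(v)\,h\bigr)\,dx.\]
Since $\mu$ is not constant, $\mu - \overline{\mu}\not\equiv 0$ in $L^2(\Omega)$, and using the density of mean-zero $H_N^2$-functions in mean-zero $L^2$ (e.g., via $C_c^\infty(\Omega)$-functions shifted by their mean) I can pick $\psi\in H_N^2(\Omega)$ with $\overline{\psi}=0$ and $\int_\Omega \mu\psi\,dx\neq 0$.

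For an arbitrary $\varphi\in H_N^2(\Omega)$ with $\overline{\varphi}=0$, I would apply the implicit function theorem to $(\lambda,\eta)\mapsto F[v+\lambda\varphi+\eta\psi]$: since the partial derivative with respect to $\eta$ at the origin equals $\int_\Omega \mu\psi\,dx\neq 0$, this yields a $C^1$ function $\eta(\lambda)$ with $\eta(0)=0$ and $F[v+\lambda\varphi+\eta(\lambda)\psi]=\beta$ for $|\lambda|$ small. Because $\overline{\varphi}=\overline{\psi}=0$, the volume constraint is simultaneously preserved, so $v+\lambda\varphi+\eta(\lambda)\psi\in\mmab^2$. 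Differentiating the minimality condition at $\lambda=0$, with $\eta'(0)=-\int_\Omega\mu\varphi\,dx/\int_\Omega\mu\psi\,dx$, produces a real constant $B$ (depending on $\psi$ but independent of $\varphi$) with $D\mf_{\tau,f}(v)\varphi = B\tau\int_\Omega\mu\varphi\,dx$ for every such $\varphi$. Splitting a general $h\in H_N^2(\Omega)$ as $h=(h-\overline{h})+\overline{h}$ and absorbing the contribution of $\overline{h}$ into a second real constant $A$ yields the weak Euler-Lagrange identity
\[\int_\Omega \mu(-\Delta h)\,dx = \int_\Omega g\,h\,dx \quad \text{for all } h\in H_N^2(\Omega),\]
with $g := A + B\mu - W''(v)\mu - (v-f)/\tau\in L^2(\Omega)$; testing against $h\equiv 1$ confirms $\overline{g}=0$.

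Finally, I need to upgrade the regularity of $\mu$ (known so far to be merely $L^2$) to $H_N^2(\Omega)$ in order to rewrite the identity above in its strong form \eqref{c17}. By the compatibility $\overline{g}=0$ and classical elliptic regularity for the Neumann problem, there is a unique $u\in H_N^2(\Omega)$ with $-\Delta u = g$, $\nabla u\cdot\mathbf{n}=0$ on $\partial\Omega$, and $\overline{u}=\overline{\mu}$. To identify $u$ with $\mu$, I would argue by duality: for any $\varphi\in L^2(\Omega)$ with $\overline{\varphi}=0$, set $h_\varphi:=\mn(\varphi)\in H_N^2(\Omega)$ as defined by \eqref{b10}; two integrations by parts, each using that $u$ or $h_\varphi$ satisfies the homogeneous Neumann condition, give
\[\int_\Omega u\varphi\,dx = \int_\Omega u(-\Delta h_\varphi)\,dx = \int_\Omega g\,h_\varphi\,dx = \int_\Omega \mu(-\Delta h_\varphi)\,dx = \int_\Omega \mu\varphi\,dx.\]
Consequently $u-\mu$ is orthogonal to every mean-zero $L^2$-function, hence is a constant that must vanish because $\overline{u-\mu}=0$. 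Thus $\mu=u\in H_N^2(\Omega)$, $-\Delta\mu=g$ almost everywhere in $\Omega$, and rearranging gives \eqref{c17}. I expect the most delicate step to be precisely this regularity upgrade: the Euler-Lagrange identity is only available against $H_N^2$ test functions, so the passage from weak to strong form requires carefully choosing the dual test functions $h_\varphi$ respecting the Neumann condition so that the two integrations by parts leave no unwanted boundary terms.
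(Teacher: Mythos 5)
Your proof is correct and rests on the same core strategy as the paper's: exploit the non-constancy of $\mu$ to produce a mean-zero direction $\psi\in H_N^2(\Omega)$ with $\int_\Omega\mu\psi\,dx\neq 0$, transverse to the level set of $F$; correct an arbitrary mean-zero perturbation $\varphi$ along $\psi$ so as to stay in $\mmab^2$; derive the weak Euler--Lagrange identity with two multipliers; and finally upgrade $\mu$ to $H_N^2(\Omega)$ by duality before passing to the strong form \eqref{c17}. The differences are in execution. Where you invoke the implicit function theorem (legitimate, since $F$ is $C^1$ on $H^1(\Omega)$ in dimension at most four and the curve $\lambda\mapsto v+\lambda\varphi+\eta(\lambda)\psi$ is then $C^1$ into $H_N^2(\Omega)$, so the chain rule applies to $E$ as well), the paper follows Zeidler and builds the admissible perturbations by hand: it regularizes $\mu-\overline{\mu}$ by compactly supported smooth functions $\nu_\eta$, applies the mean value theorem to $t\mapsto F\left[v+\lambda(\gamma_\eta(\zeta)+t)\nu_\eta+\lambda\zeta\right]$ to land exactly on the constraint along a sequence $\lambda_n\to 0$, works with the minimality inequality and its counterpart for $-\zeta$ to obtain an equality, and sends $\eta\to 0$ only at the very end. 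Your regularity step---solving the auxiliary Neumann problem $-\Delta u=g$ (solvable since $\overline{g}=0$) and identifying $u=\mu$ by testing against $\mn(\varphi)$ from \eqref{b10}---is a more explicit rendering of the paper's briefly sketched ``two-step duality argument''. Your route is shorter and cleaner; the paper's avoids any abstract appeal to the IFT and keeps all differentiability checks elementary. Both arguments are complete and yield the same multipliers.
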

 
\begin{proof} Owing to the nonlinear constraint $F[w]=\beta$, we proceed as in \cite[Proposition~43.6]{Ze85}. We observe that, since $\mu$ is not a constant, 
\begin{equation}
\langle \mu , \mu-\overline{\mu} \rangle_2 = \|\mu - \overline{\mu}\|_2^2 > 0\,. \label{c12}
\end{equation}
Next, it turns out that $\mu-\overline{\mu}$ is not sufficiently smooth for the forthcoming analysis and a regularization is needed. Owing to the density of $\mathcal{C}_0^\infty(\Omega)$ in $L^2(\Omega)$, for $\eta\in (0,1)$, there exists $\nu_\eta\in \mathcal{C}_0^\infty(\Omega)$ such that
\begin{equation}
\overline{\nu_\eta}=0 \;\;\mbox{ and }\;\; \|\nu_\eta - (\mu - \overline{\mu}) \|_2\le\eta\,.\label{c12b}
\end{equation}
Combining \eqref{c12} and \eqref{c12b} gives
$$
\langle \mu , \nu_\eta \rangle_2 = \langle \mu - \overline{\mu} , \nu_\eta \rangle_2 = \|\mu - \overline{\mu}\|_2^2 + \langle \mu - \overline{\mu} , \nu_\eta - (\mu - \overline{\mu} )\rangle_2 \ge \|\mu - \overline{\mu}\|_2 \left( \| \mu - \overline{\mu} \|_2 - \eta \right)\,,
$$
whence
\begin{equation}
\langle \mu , \nu_\eta \rangle_2 \ge \frac{\|\mu-\overline{\mu} \|_2^2}{2}>0 \label{c12c}
\end{equation}
for $\eta$ sufficiently small.

We now fix $\eta\in (0,1)$ small enough such that \eqref{c12c} holds true, let $n\ge 1$ and take $\zeta\in H_N^2(\Omega)\cap\mathcal{C}^\infty(\overline{\Omega})$ with $\overline{\zeta}=0$: we aim at constructing a perturbation of $v$ in $\mmab^2$. To this end, we define
\begin{equation*}
\gamma_\eta(\zeta) := -\frac{\langle \mu , \zeta \rangle_2}{\langle \mu , \nu_\eta \rangle_2}\;\;\mbox{ and }\;\;
\varphi_\lambda(t) := F\left[ v+\lambda(\gamma_\eta(\zeta)+t)\ \nu_\eta + \lambda\ \zeta \right] - \beta
\end{equation*}
for $\lambda>0$ and $t\in [-1/n,1/n]$. Since
\begin{align*}
\|(\gamma_\eta(\zeta)+t)\ \nu_\eta + \zeta\|_2 &\le \left( \frac{|\langle \mu , \zeta \rangle_2|}{\langle \mu , \nu_\eta \rangle_2} + \frac{1}{n} \right)\ \|\nu_\eta\|_2 + \|\zeta\|_2 \\
&\le \left( 1 + 2\ \frac{|\langle \mu , \zeta \rangle_2|}{\|\mu-\overline{\mu}\|_2^2}  \right)\ (1+\|\mu-\overline{\mu}\|_2) + \|\zeta\|_2
\end{align*}
for all $n\ge 1$ and $t\in [-1/n,1/n]$, we have 
$$
\varphi_\lambda(t) = F[v]-\beta + \lambda\ \left\langle \mu , \left[ (\gamma_\eta(\zeta)+t)\ \nu_\eta + \zeta \right] \right\rangle_2 + \lambda\ \varepsilon_\eta(\lambda)\,, 
$$
where $\varepsilon_\eta$ is a function which depends neither on $n\ge 1$ nor on $t\in [-1/n,1/n]$ and satisfies $\varepsilon_\eta(\lambda)\to 0$ as $\lambda\to 0$. Owing to the property $v\in\mmab^2$ and the definition of $\gamma_\eta(\zeta)$, we find
$$
\varphi_\lambda\left( \pm \frac{1}{n} \right) = \pm \frac{\lambda}{n}\ \langle \mu , \nu_\eta \rangle_2 + \lambda\ \varepsilon_\eta(\lambda)\,, 
$$
and it follows from the positivity \eqref{c12c} of $\langle \mu , \nu_\eta \rangle_2$ that there exists $\lambda_n\in (0,1/n)$ such that 
$$
\varphi_{\lambda_n}\left( - \frac{1}{n} \right) < 0 < \varphi_{\lambda_n}\left( + \frac{1}{n} \right) \,.
$$
The mean-value theorem then guarantees that there is $t_n\in (-1/n,1/n)$ such that $\varphi_{\lambda_n}(t_n)=0$. Setting $\zeta_n := (\gamma_\eta(\zeta)+t_n)\ \nu_\eta + \zeta$, we have thus shown that $F[v+\lambda_n \zeta_n]= \beta$. Since 
$$
\overline{v+\lambda_n \zeta_n} = \overline{v} + \lambda_n (\gamma_\eta(\zeta)+t_n)\ \overline{\nu_\eta} + \lambda_n\ \overline{\zeta} = \alpha
$$
by \eqref{c12b} and the properties of $\zeta$, we conclude that $v+\lambda_n \zeta_n\in\mmab^1$. Recalling that $v$, $\nu_\eta$, and $\zeta$ belong to $H_N^2(\Omega)$, so that $v+\lambda_n \zeta_n\in\mmab^2$, the minimizing property of $v$ ensures that $\mf_{\tau,f}[v]\le \mf_{\tau,f}[v+\lambda_n \zeta_n]$, that is
$$
\frac{\|v-f\|_2^2}{2} + \tau\ E[v] \le \frac{\|v+\lambda_n \zeta_n-f\|_2^2}{2} + \tau\ E[v+\lambda_n \zeta_n]\,.
$$
This inequality also reads
\begin{align}
0 \le & - \big\langle \Delta v , W'(v+\lambda_n \zeta_n) - W'(v) \big\rangle_2 - \lambda_n\ \big\langle \Delta\zeta_n ,  W'(v+\lambda_n \zeta_n) - \Delta v \big\rangle_2 \nonumber \\
& + \frac{1}{2}\ \big\langle W'(v+\lambda_n \zeta_n) - W'(v) , W'(v+\lambda_n \zeta_n) + W'(v) \big\rangle_2 \nonumber \\
& + \frac{\lambda_n^2}{2}\ \|\Delta\zeta_n\|_2^2 + \frac{\lambda_n}{\tau}\ \langle v-f , \zeta_n \rangle_2 + \frac{\lambda_n^2}{2\tau}\ \|\zeta_n\|_2^2\,. \label{c13} 
\end{align}
Observing that $(\zeta_n)_{n\ge 1}$ converges towards $\zeta_\infty:=\gamma_\eta(\zeta)\ \nu_\eta + \zeta$ in $H^2(\Omega)$ as $n\to \infty$, we find that
\begin{align*}
\lim_{n\to\infty} \big\langle \Delta\zeta_n , W'(v+\lambda_n \zeta_n) - \Delta v \big\rangle_2 & = \int_\Omega \mu \ \Delta\zeta_\infty\ dx\,,\\
\lim_{n\to\infty} \lambda_n\ \|\zeta_n\|_2^2 = \lim_{n\to\infty} \lambda_n\ \|\Delta\zeta_n\|_2^2 &= 0\,,
\end{align*}
while classical arguments and the embedding of $H^2(\Omega)$ in $L^\infty(\Omega)$ ensure that
\begin{align*}
& \lim_{n\to\infty} \frac{1}{\lambda_n}\ \big\langle \Delta v , W'(v+\lambda_n \zeta_n) - W'(v) \big\rangle_2 = \int_\Omega W''(v)\ \zeta_\infty\ \Delta v\ dx\,, \\
& \lim_{n\to\infty} \frac{1}{2\lambda_n}\ \big\langle W'(v+\lambda_n \zeta_n) - W'(v) , W'(v+\lambda_n \zeta_n) + W'(v) \big\rangle_2\\ 
& \qquad\qquad = \int_\Omega W'(v)\ W''(v)\ \zeta_\infty\ dx\,.
\end{align*}
Dividing \eqref{c13} by $\lambda_n$ and letting $n\to \infty$ give
\begin{align*}
0 \le & - \int_\Omega W''(v)\ \zeta_\infty\ \Delta v\ dx - \int_\Omega \mu \ \Delta\zeta_\infty\ dx + \int_\Omega W'(v)\ W''(v)\ \zeta_\infty\ dx + \frac{\langle v-f , \zeta_\infty \rangle_2}{\tau} \,,\\
= & \int_\Omega \mu \left[ - \Delta\left( \gamma_\eta(\zeta)\ \nu_\eta + \zeta \right) + W''(v)\left( \gamma_\eta(\zeta)\ \nu_\eta + \zeta \right) \right]\ dx + \int_\Omega \frac{v-f}{\tau} \left( \gamma_\eta(\zeta)\ \nu_\eta + \zeta \right)\ dx\,.
\end{align*}
Observing that $-\zeta$ also belongs to $H_N^2(\Omega)\cap\mathcal{C}^\infty(\overline{\Omega})$ and satisfies $\overline{(-\zeta)}=0$, the above inequality is also valid for $-\zeta$ and, since $\gamma_\eta(-\zeta)=-\gamma_\eta(\zeta)$, we end up with  
\begin{equation}
\int_\Omega \mu \left[ - \Delta\left( \gamma_\eta(\zeta)\ \nu_\eta + \zeta \right) + W''(v)\left( \gamma_\eta(\zeta)\ \nu_\eta + \zeta \right) \right]\ dx + \int_\Omega \frac{v-f}{\tau} \left( \gamma_\eta(\zeta)\ \nu_\eta + \zeta \right)\ dx=0 \label{c14}
\end{equation}
for all $\zeta\in H_N^2(\Omega)\cap\mathcal{C}^\infty(\overline{\Omega})$ satisfying $\overline{\zeta}=0$.

A first consequence of \eqref{c14} is that $\mu\in H^2_N (\Omega)$: indeed, we can also write \eqref{c14} as
\begin{align*}
\int_\Omega \mu\ \Delta\zeta\ dx = & -\gamma_\eta(\zeta)\ \langle \mu , \Delta\nu_\eta \rangle_2 + \gamma_\eta(\zeta)\ \left\langle W''(v)\mu + \frac{v-f}{\tau} , \nu_\eta \right\rangle_2 \\
& + \left\langle W''(v)\mu + \frac{v-f}{\tau} , \zeta \right\rangle_2\,.
\end{align*}
Since $v\in H^2(\Omega)$, $\mu\in L^2(\Omega)$, and
$$
|\gamma_\eta(\zeta)|\le \frac{\|\mu\|_2\ \|\zeta\|_2}{\langle \mu , \nu_\eta \rangle_2}\,,
$$
it follows from the continuous embedding of $H^2(\Omega)$ in $L^\infty(\Omega)$ that
\begin{eqnarray*}
\left| \int_\Omega \mu\ \Delta\zeta\ dx \right| & = & \frac{\|\mu\|_2\ \|\zeta\|_2}{\langle \mu , \nu_\eta \rangle_2} \left[ \|\mu\|_2\ \|\nu_\eta\|_{H^2} + \left( \|W''(v)\|_\infty\ \|\mu\|_2 + \frac{\|v-f\|_2}{\tau} \right)\ \|\nu_\eta\|_2  \right] \\
& & \qquad + \left( \| W''(v)\|_\infty\ \|\mu\|_2 + \frac{\|v-f\|_2}{\tau} \right) \ \|\zeta\|_2 \\
& \le & C(\eta,v,f,\tau)\ \|\zeta\|_2\,,
\end{eqnarray*}
and a two-step duality argument entails first that $\mu\in H^2(\Omega)$ and then that it satisfies the Neumann homogeneous boundary conditions, that is, 
\begin{equation}
\mu\in H_N^2(\Omega)\,. \label{c15}
\end{equation}
We can then integrate twice by parts the first term of the left-hand side of \eqref{c14} to obtain
\begin{equation*}
\int_\Omega \Xi\ \left( \gamma_\eta(\zeta)\ \nu_\eta + \zeta \right)\ dx =0 
\end{equation*}
for all $\zeta\in H_N^2(\Omega)\cap\mathcal{C}^\infty(\overline{\Omega})$ satisfying $\overline{\zeta}=0$, where
\begin{equation*}
\Xi := \frac{v-f}{\tau} - \Delta\mu + W''(v)\ \mu\,.
\end{equation*}
We may now let $\eta\to 0$ with the help of  \eqref{c12b}  and use a density argument to conclude that 
\begin{equation*}
\int_\Omega \Xi\ \left( \zeta - \frac{\langle \mu , \zeta \rangle_2}{\|\mu-\overline{\mu}\|_2^2}\ (\mu-\overline{\mu}) \right)\ dx =0 
\end{equation*}
for all $\zeta\in L^2(\Omega)$ satisfying $\overline{\zeta}=0$. Alternatively, 
\begin{equation*}
\int_\Omega \left( \Xi - \frac{\langle \Xi , \mu-\overline{\mu} \rangle_2}{\|\mu-\overline{\mu}\|_2^2}\ \mu \right)\ \zeta\ dx =0\,.
\end{equation*}
Now, take $\zeta\in L^2(\Omega)$. Applying the above identity to $\zeta-\overline{\zeta}$ gives
\begin{equation*}
\int_\Omega \left( \Xi - \overline{\Xi} - \frac{\langle \Xi , \mu-\overline{\mu} \rangle_2}{\|\mu-\overline{\mu}\|_2^2}\ (\mu-\overline{\mu}) \right)\ \zeta\ dx =0\,,
\end{equation*}
and, since this equality is valid for all test functions in $L^2(\Omega)$, we realize that 
\begin{equation*}
\frac{v-f}{\tau} - \Delta\mu + W''(v)\ \mu = \Xi = A + B\ \mu \;\;\mbox{ in }\;\; \Omega\,, 
\end{equation*}
with
\begin{equation}
A := \overline{\Xi} - \frac{\langle \Xi , \mu-\overline{\mu} \rangle_2}{\|\mu-\overline{\mu}\|_2^2}\ \overline{\mu} \;\;\mbox{ and }\;\; B:= \frac{\langle \Xi , \mu-\overline{\mu} \rangle_2}{\|\mu-\overline{\mu}\|_2^2}\,.\label{c16}
\end{equation}
The proof of Lemma~\ref{le:c2} is complete.
\end{proof}

Noting that the Lagrange multipliers $A$ and $B$ arising in Lemma~\ref{le:c2} are defined in a somewhat implicit way according to \eqref{c16}, the next step is to obtain some estimates on both of them. As we shall see now, this is quite easy for $A+B\overline{\mu}$ for which we have an explicit formula but turns out to be more complicated for $B$. 

\begin{lemma}\label{le:c3}
Assume that $v\in\mmab^2$ solves the minimization problem \eqref{c1} and is such that $\mu:= -\Delta v + W'(v)$ is not a constant. Assume further that $\overline{f}=\alpha$ and $\mzab=\emptyset$ and consider a positive real number $M\ge E(f)$. Then there is a positive real number $\kappa_M>0$ depending on $M$ such that
\begin{equation}
\left| A + B\ \overline{\mu} \right| \le \kappa_M \;\;\mbox{ and }\;\; |B| \le \kappa_M \left( 1 + \frac{\| v-f\|_2}{\tau} \right)\,. \label{c21}
\end{equation}
\end{lemma}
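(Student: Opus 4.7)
The starting point will be the Euler--Lagrange identity from Lemma~\ref{le:c2}, namely
$$
\Xi := \frac{v-f}{\tau} - \Delta\mu + W''(v)\,\mu = A + B\,\mu \qquad \text{in } \Omega,
$$
together with the bound $E[v]\le E[f]\le M$ from Lemma~\ref{le:c1}. Applying Lemma~\ref{le:b2} then gives a uniform $H^2$-bound on $v$ in terms of $M$, whence a uniform $L^\infty$-bound on $v$ by Sobolev embedding, which in turn controls $\|W''(v)\|_\infty$. I also record that $\|\mu\|_2 = \sqrt{2E[v]}\le \sqrt{2M}$.

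For the first estimate, I would simply integrate the Euler--Lagrange equation over $\Omega$. Since $\mu\in H^2_N(\Omega)$, the divergence theorem gives $\int_\Omega \Delta\mu\,dx = 0$; and the assumption $\overline{f}=\alpha=\overline{v}$ yields $\overline{(v-f)}=0$. Thus
$$
A + B\,\overline{\mu} = \overline{W''(v)\,\mu},
$$
and the bound $|A+B\overline{\mu}|\le \kappa_M$ follows from Cauchy--Schwarz together with the $L^\infty$-bound on $W''(v)$ and the $L^2$-bound on $\mu$ recalled above.

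For the estimate on $B$ alone, the natural choice of testing against $\mu-\overline{\mu}$ yields $B\,\|\mu-\overline{\mu}\|_2^2$, which cannot be bounded below without extra information: this is exactly where the hypothesis $\mzab=\emptyset$ must enter. The plan is instead to test the Euler--Lagrange identity against $\mathcal{N}(\mu-\overline{\mu})$, which is admissible since $\mu-\overline{\mu}$ has zero mean. As $\overline{\mathcal{N}(\mu-\overline{\mu})}=0$, the contribution of $A$ vanishes, and a double integration by parts (using that both $\mu$ and $\mathcal{N}(\mu-\overline{\mu})$ belong to $H^2_N(\Omega)$) turns the identity into
$$
B\,\|\nabla\mathcal{N}(\mu-\overline{\mu})\|_2^2 = \Big\langle \frac{v-f}{\tau},\,\mathcal{N}(\mu-\overline{\mu})\Big\rangle_2 + \|\mu-\overline{\mu}\|_2^2 + \big\langle W''(v)\mu,\,\mathcal{N}(\mu-\overline{\mu})\big\rangle_2,
$$
because $-\langle \Delta\mu,\mathcal{N}(\mu-\overline{\mu})\rangle_2 = \langle \mu,-\Delta\mathcal{N}(\mu-\overline{\mu})\rangle_2 = \|\mu-\overline{\mu}\|_2^2$.

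At this stage Lemma~\ref{le:b3}(2) supplies the crucial lower bound $\|\nabla\mathcal{N}(\mu-\overline{\mu})\|_2^2 \ge m_M>0$, which is exactly what compensates for the possible smallness of $\mu-\overline{\mu}$. Each of the three terms on the right is controlled by Cauchy--Schwarz together with the elliptic estimate $\|\mathcal{N}(w)\|_2\le C\|w\|_2$ for zero-mean $w$ (obtained from Poincar\'e--Wirtinger and energy testing for $\mathcal{N}$), yielding
$$
|B|\,m_M \le C_M\left(1 + \frac{\|v-f\|_2}{\tau}\right),
$$
after absorbing constants depending only on $M$. I expect the main obstacle to be identifying the correct test function $\mathcal{N}(\mu-\overline{\mu})$, as it is what ties the Lagrange multiplier $B$ to the quantitative non-degeneracy hypothesis encoded by $\mzab=\emptyset$; once this choice is made, the remaining manipulations are routine integrations by parts and Cauchy--Schwarz.
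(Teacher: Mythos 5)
Your proposal is correct and follows essentially the same route as the paper: the first bound comes from integrating the Euler--Lagrange identity over $\Omega$, and the second from testing against $\mathcal{N}(\mu-\overline{\mu})$, with Lemma~\ref{le:b3}(2) providing the lower bound $\|\nabla\mathcal{N}(\mu-\overline{\mu})\|_2^2\ge m_M$ that compensates for the possible degeneracy of $\|\mu-\overline{\mu}\|_2$. The only cosmetic difference is that the paper bounds $\overline{W''(v)\mu}$ via $\|W''(v)\|_2\|\mu\|_2$ and the $H^1\hookrightarrow L^4$ embedding rather than via $\|W''(v)\|_\infty$, which is immaterial.
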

 
\begin{proof}
Integrating \eqref{c17} over $\Omega$ and using the properties $\mu\in H_N^2(\Omega)$ and $\overline{v}=\overline{f}=\alpha$, we obtain the identity
$$
A + B\ \overline{\mu} = \overline{W''(v)\ \mu}\,.
$$
It then follows from the H\"older inequality, the continuous embedding of $H^1(\Omega)$ in $L^4(\Omega)$, \eqref{b6}, $F(v)=\beta$, and \eqref{c1b} that 
\begin{align*}
\left| A + B\ \overline{\mu} \right| & \le C\ \|W''(v)\|_2\ \|\mu\|_2 \le C \left( 1 + \|v\|_4^2 \right)\ \sqrt{E[v]} \\
& \le C \left( 1 + \|v\|_{H^1}^2 \right)\ \sqrt{E[f]} \le C\ \sqrt{M}\,,
\end{align*}
which is the first inequality in \eqref{c21}. Next, since the average of $\mu-\overline{\mu}$ over $\Omega$ is zero, we can apply the operator $\mathcal{N}$ introduced in \eqref{b10} on $\mu-\overline{\mu}$ and the function $\mathcal{N}\left( \mu-\overline{\mu} \right)$ belongs to $H^2_N(\Omega)$. We then infer from \eqref{b10} and \eqref{c17} that
\begin{align}
B \left\|\nabla \mathcal{N}\left( \mu-\overline{\mu} \right) \right\|_2^2 & = - B\ \int_\Omega \mathcal{N}\left( \mu-\overline{\mu} \right)\ \Delta\mathcal{N}\left( \mu-\overline{\mu} \right)\ dx = B\ \int_\Omega \left( \mu-\overline{\mu} \right)\ \mathcal{N}\left( \mu-\overline{\mu} \right)\ dx \nonumber\\
& = \int_\Omega \left( \frac{v-f}{\tau} - \Delta\mu + W''(v)\ \mu - A \right)\ \mathcal{N}\left( \mu-\overline{\mu} \right)\ dx \nonumber\\
& = \int_\Omega \left( \frac{v-f}{\tau} + W''(v)\ \mu \right)\ \mathcal{N}\left( \mu-\overline{\mu} \right)\ dx + \left\| \mu-\overline{\mu} \right\|_2^2 \,. \label{c23}
\end{align}
On the one hand, since $\mzab=\emptyset$, $v\in\mmab^2$, and $E[v]\le E[f] \le M$ by \eqref{c1b}, we can apply Lemma~\ref{le:b3} which simply yields 
\begin{equation}
|B| \left\|\nabla \mathcal{N}\left( \mu-\overline{\mu} \right) \right\|_2^2 \ge m_M\ |B|\,. \label{c24}
\end{equation}
On the other hand, according to \eqref{c1b}, we have
\begin{equation}
\left\| \mu-\overline{\mu} \right\|_2 \le \left\| \mu\right\|_2 + \left\|\overline{\mu} \right\|_2 = 2\|\mu\|_2 = \sqrt{8 E[v]} \le C\ \sqrt{E[f]} \le C\ \sqrt{M} \,, \label{c25}
\end{equation}
which, together with Lemma~\ref{le:b2}, the Poincar\'e-Wirtinger inequality \eqref{b8}, the embedding of $H^2(\Omega)$ in $L^\infty(\Omega)$, \eqref{b10}, \eqref{c1b}, and \eqref{c25}, gives
\begin{align}
\left| \int_\Omega \left( \frac{v-f}{\tau} + W''(v)\ \mu \right)\ \mathcal{N}\left( \mu-\overline{\mu} \right)\ dx \right| & \le \left( \frac{\| v-f\|_2}{\tau} + \|W''(v)\|_\infty\ \|\mu\|_2 \right)\ \left\| \mathcal{N}\left( \mu-\overline{\mu} \right) \right\|_2 \nonumber\\
\le\ & C_2 \left( \frac{\| v-f\|_2}{\tau} + C\ (1 + \|v\|_\infty^2)\ \sqrt{2 E[v]} \right)\ \left\| \nabla \mathcal{N}\left( \mu-\overline{\mu} \right) \right\|_2 \nonumber \\
\le\ & C \left( \frac{\| v-f\|_2}{\tau} + (1 + \|v\|_{H^2}^2)\ \sqrt{E[f]} \right)\ \left\| \mu-\overline{\mu} \right\|_2 \nonumber\\
\le\ & C \left( \frac{\| v-f\|_2}{\tau} + (1 + E[v])\ \sqrt{M} \right)\ \sqrt{M} \nonumber\\
\le\ & C \left( \frac{\| v-f\|_2}{\tau} + (1 + E[f])\ \sqrt{M} \right)\ \sqrt{M} \nonumber\\
\le\ & C(M) \left( 1 + \frac{\| v-f\|_2}{\tau} \right)\,. \label{c26}
\end{align} 
We then infer from \eqref{c23}--\eqref{c26} that 
\begin{align*}
m_M\ |B| & \le |B| \left\|\nabla \mathcal{N}\left( \mu-\overline{\mu} \right) \right\|_2^2 \\
& \le \left| \int_\Omega \left( \frac{v-f}{\tau} + W''(v)\ \mu \right)\ \mathcal{N}\left( \mu-\overline{\mu} \right)\ dx \right| + \left\| \mu-\overline{\mu} \right\|_2^2 \\
& \le C(M) \left( 1 + \frac{\| v-f\|_2}{\tau} \right)\,,
\end{align*}
which readily gives the second inequality in \eqref{c21} since $m_M>0$ by Lemma~\ref{le:b3}.
\end{proof}

\section{Existence}\label{sec:exist}

In this section, we prove the existence part of Theorem~\ref{th:a1}. We fix $\alpha\in\RR$ and $\beta\in (\beta_\alpha,\infty)$ such that 
\begin{equation}
\mzab = \emptyset\,. \label{d0}
\end{equation}
Consider an initial condition $v_0\in \mmab^2$ and a time step $\tau\in (0,1)$. We define a sequence $(v_n^\tau)_{n\ge 1}$ inductively as follows:
\begin{align}
& v_0^\tau  = v_0\,,\label{d20} \\
& v_{n+1}^\tau  \mbox{ is a minimizer of }\; \mf_{\tau,v_n^\tau} \;\mbox{ in }\; \mmab^2\,, \qquad n\ge 0\,,\label{d21}
\end{align}
the functional $\mf_{\tau,v_n^\tau}$ being defined in \eqref{c0}. Owing to Lemma~\ref{le:c1}, this sequence is well defined. Setting 
\begin{equation}
\mu_n^\tau := - \Delta v_n^\tau + W'\left( v_n^\tau \right)\,, \qquad n\ge 0\,, \label{d22}
\end{equation}
we define two piecewise constant time-dependent functions $v^\tau$ and $\mu^\tau$ by 
\begin{equation}
\left( v^\tau(t) , \mu^\tau(t) \right) := \left( v_n^\tau , \mu_n^\tau \right)\,, \qquad t\in [n\tau, (n+1)\tau)\,, \qquad n\ge 0\,. \label{d23}
\end{equation}
From the minimizing property \eqref{d21} of $v_{n+1}^\tau$, $n\ge 0$, we deduce the following estimates on $\left( v^\tau , \mu^\tau \right)$.

\begin{lemma}\label{le:d1}
For $\tau\in (0,1)$, $t_1\ge 0$, and $t_2>t_1$, we have
\begin{align}
E\left[ v^\tau(t_2) \right] & \le E\left[ v^\tau(t_1) \right] \le E[v_0]\,, \label{d24} \\
\left\| v^\tau(t_1) \right\|_{H^2} + \left\| \mu^\tau(t_1) \right\|_2 & \le C_1\ \left( 1+\sqrt{E[v_0]} \right)\,, \label{d24b} \\
\sum_{n=0}^\infty \left\| v_{n+1}^\tau - v_n^\tau \right\|_2^2 & \le 2\tau\ E[v_0]\,, \label{d27} \\
\left\| v^\tau(t_2) - v^\tau(t_1) \right\|_2^2 & \le 2E[v_0]\ (\tau + t_2-t_1)\,. \label{d25}
\end{align}
\end{lemma}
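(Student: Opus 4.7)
The plan is to extract everything from the one-step minimizing inequality
\[
\mf_{\tau,v_n^\tau}[v_{n+1}^\tau] \le \mf_{\tau,v_n^\tau}[v_n^\tau],
\]
which is available because $v_n^\tau\in\mmab^2$ is an admissible competitor for the variational problem defining $v_{n+1}^\tau$. Unfolding the definition \eqref{c0}, this yields the fundamental step-wise estimate
\[
\frac{\|v_{n+1}^\tau - v_n^\tau\|_2^2}{2} + \tau\, E\bigl[v_{n+1}^\tau\bigr] \le \tau\, E\bigl[v_n^\tau\bigr], \qquad n\ge 0,
\]
which is exactly \eqref{c1b} of Lemma~\ref{le:c1} applied with $f=v_n^\tau$, combined with the trivial nonnegativity of the $L^2$-term on the right-hand side.

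From this single inequality, \eqref{d24} is immediate by iteration on $n$ combined with the observation that on each interval $[n\tau,(n+1)\tau)$ the function $v^\tau$ is constant, so it suffices to control $E[v_n^\tau]$ in terms of $E[v_0^\tau]=E[v_0]$. Inequality \eqref{d24b} then follows directly by plugging \eqref{d24} into the bound \eqref{b7} of Lemma~\ref{le:b2} (which applies because $v^\tau(t_1)\in\mmab^2$ has mean $\alpha$) together with $\|\mu^\tau(t_1)\|_2^2 = 2E[v^\tau(t_1)]$. For \eqref{d27}, I rewrite the step-wise estimate as
\[
\|v_{n+1}^\tau - v_n^\tau\|_2^2 \le 2\tau\bigl( E[v_n^\tau] - E[v_{n+1}^\tau] \bigr),
\]
and sum on $n\ge 0$: the right-hand side telescopes and is majorised, thanks to the nonnegativity of $E$, by $2\tau E[v_0]$.

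Finally, \eqref{d25} follows from \eqref{d27} by the standard Cauchy--Schwarz trick for piecewise-constant interpolants. Writing $t_i\in[n_i\tau,(n_i+1)\tau)$ for $i=1,2$ with $n_1\le n_2$, I telescope
\[
v^\tau(t_2)-v^\tau(t_1) = \sum_{k=n_1}^{n_2-1} \bigl( v_{k+1}^\tau - v_k^\tau \bigr),
\]
apply Cauchy--Schwarz in the finite sum, and use the step-wise telescoping bound to estimate
\[
\bigl\| v^\tau(t_2)-v^\tau(t_1) \bigr\|_2^2 \le (n_2-n_1) \sum_{k=n_1}^{n_2-1} \|v_{k+1}^\tau-v_k^\tau\|_2^2 \le 2(n_2-n_1)\tau\, E[v_0],
\]
and finish by noting that $(n_2-n_1)\tau \le (t_2-t_1)+\tau$. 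No step is delicate here: the whole lemma is essentially a bookkeeping consequence of Lemma~\ref{le:c1} (whose \eqref{c1b} encapsulates the only nontrivial ingredient, namely the gradient-flow dissipation) and Lemma~\ref{le:b2}, and I do not anticipate any obstacle beyond keeping track of the piecewise-constant time-indexing in passing from discrete bounds on $(v_n^\tau)$ to continuous-time bounds on $v^\tau$.
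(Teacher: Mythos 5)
Your proposal is correct and follows essentially the same route as the paper: both extract the step-wise dissipation inequality $\frac{1}{2\tau}\|v_{n+1}^\tau-v_n^\tau\|_2^2+E[v_{n+1}^\tau]\le E[v_n^\tau]$ from the minimizing property of $v_{n+1}^\tau$ with $v_n^\tau$ as competitor, then obtain \eqref{d24} by induction, \eqref{d24b} from Lemma~\ref{le:b2} and $\|\mu\|_2^2=2E$, \eqref{d27} by telescoping, and \eqref{d25} by Cauchy--Schwarz with $(n_2-n_1)\tau\le t_2-t_1+\tau$. (The only quibble: the step-wise inequality is strictly stronger than \eqref{c1b}, not identical to it, but your derivation of it from the definition of $\mf_{\tau,f}$ is exactly the paper's.)
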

 
\begin{proof}
Consider $n\ge 0$. According to the definition \eqref{d21} of $v_{n+1}^\tau$, we have $\mf_{\tau,v_n^\tau}\left[ v_{n+1}^\tau \right] \le \mf_{\tau,v_n^\tau}\left[ v_n^\tau \right]$, that is,
\begin{equation}
\frac{\left\| v_{n+1}^\tau - v_n^\tau \right\|_2^2}{2\tau} + E\left[ v_{n+1}^\tau \right] \le E\left[ v_n^\tau \right]\,. \label{d26}
\end{equation}
On the one hand, the time monotonicity \eqref{d24} of $t\mapsto E\left[ v^\tau(t) \right]$ readily follows from \eqref{d26} by induction while the $H^2$-estimate on $v^\tau$ and the $L^2$-estimate on $\mu^\tau$ in \eqref{d24b} are straightforward consequences of Lemma~\ref{le:b2}, the definition of $E$ and $\mu^\tau$, and \eqref{d24}. On the other hand, summing \eqref{d26} over $n\ge 0$ gives \eqref{d27}. 

Finally, let $t_2>t_1\ge 0$ and denote the largest integer smaller than $t_i/\tau$ by $n_i$, $i=1,2$. We then infer from \eqref{d27} that 
\begin{align*}
\left\| v^\tau(t_2) - v^\tau(t_1) \right\|_2 & = \left\| v_{n_2}^\tau - v_{n_1}^\tau \right\|_2 \le \sum_{n=n_1}^{n_2-1} \left\| v_{n+1}^\tau - v_n^\tau \right\|_2 \le \sqrt{n_2-n_1} \left( \sum_{n=n_1}^{n_2-1} \left\| v_{n+1}^\tau - v_n^\tau \right\|_2^2 \right)^{1/2} \\
& \le \left( \frac{t_2+\tau - t_1}{\tau} \right)^{1/2}\ \sqrt{2\tau E[v_0]}\,,
\end{align*}
whence \eqref{d25}.
\end{proof}

Consider next $n\ge 0$. We observe that, since $v_{n+1}^\tau\in H^2_N(\Omega)$, the function $\mu_{n+1}^\tau$ defined in \eqref{d22} is constant if and only if $-\Delta v_{n+1}^\tau + W'\left( v_{n+1}^\tau \right) - \overline{W'\left( v_{n+1}^\tau \right)} = 0$ in $\Omega$, that is, $v_{n+1}^\tau\in\mzab$. Since $\mzab$ is assumed to be empty, this situation cannot occur and we have thus established that $\mu_{n+1}^\tau$ is not a constant for all $n\ge 0$. We are then in a position to apply Lemma~\ref{le:c2} for each $n\ge 0$ and deduce that $\mu_{n+1}^\tau\in H_N^2(\Omega)$ and there are real numbers $A_{n+1}^\tau$ and $B_{n+1}^\tau$ such that $\mu_{n+1}^\tau$ solves
\begin{equation}
\frac{v_{n+1}^\tau - v_n^\tau}{\tau} - \Delta\mu_{n+1}^\tau + W''\left( v_{n+1}^\tau \right)\ \mu_{n+1}^\tau = A_{n+1}^\tau + B_{n+1}^\tau\ \mu_{n+1}^\tau \;\;\mbox{ in }\;\; \Omega\,. \label{d33}
\end{equation} 
We then define two piecewise constant time-dependent functions $A^\tau$ and $B^\tau$ by
\begin{equation}
\left( A^\tau(t) , B^\tau(t) \right) := \left( A_n^\tau , B_n^\tau \right)\,, \qquad t\in [n\tau, (n+1)\tau)\,, \qquad n\ge 1\,, \label{d34}
\end{equation}
and collect bounds for these functions in the next lemma.

\begin{lemma}\label{le:d2}
For $\tau\in (0,1)$ and $T>\tau$, we have
\begin{equation}
\int_\tau^T \left( \left| A^\tau(t) \right|^2 + \left| B^\tau(t) \right|^2 +\left\| \mu^\tau(t) \right\|_{H^2}^2\right)\ dt \le C_3(T)\,. \label{d35}
\end{equation}
\end{lemma}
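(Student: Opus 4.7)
The plan is to combine the Euler--Lagrange bounds from Lemma~\ref{le:c3}, applied at each step of the minimizing scheme, with the telescoping estimate \eqref{d27} and standard elliptic regularity for the Neumann Laplacian.

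First I would apply Lemma~\ref{le:c3} iteratively. By the minimizing property \eqref{d21}, the hypothesis $\mzab=\emptyset$, and the monotonicity \eqref{d24}, at each step $n\ge 0$ the function $v_{n+1}^\tau$ minimises $\mf_{\tau,v_n^\tau}$ over $\mmab^2$, lies itself in $\mmab^2$, and satisfies $E[v_{n+1}^\tau]\le E[v_n^\tau]\le E[v_0]$. Setting $M:=E[v_0]$ once and for all, the constant $\kappa_M$ and lower bound $m_M>0$ of Lemma~\ref{le:b3} are therefore $n$-independent, and Lemma~\ref{le:c3} yields
\begin{equation*}
\left|A_{n+1}^\tau+B_{n+1}^\tau\,\overline{\mu_{n+1}^\tau}\right|\le \kappa_M,\qquad
|B_{n+1}^\tau|\le \kappa_M\left(1+\frac{\|v_{n+1}^\tau-v_n^\tau\|_2}{\tau}\right).
\end{equation*}
Squaring the second inequality, multiplying by $\tau$, and summing over $n$ with $n\tau\in[\tau,T]$, the key cancellation is provided by \eqref{d27}, which gives $\sum_n \|v_{n+1}^\tau-v_n^\tau\|_2^2/\tau\le 2E[v_0]$. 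This produces the desired bound $\int_\tau^T |B^\tau|^2\,dt\le C(T)$.

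Next I would bound $A^\tau$. From \eqref{d24b} and Cauchy--Schwarz, $|\overline{\mu_n^\tau}|\le \|\mu_n^\tau\|_2/\sqrt{|\Omega|}$ is bounded uniformly in $n$ and $\tau$. Hence the first inequality above gives $|A_n^\tau|\le \kappa_M + C\,|B_n^\tau|$, so that $\int_\tau^T |A^\tau|^2\,dt$ is controlled by the previous integral plus $T$.

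For the $H^2$-bound on $\mu^\tau$, I would rewrite the discrete Euler--Lagrange equation \eqref{d33} as
\begin{equation*}
-\Delta \mu_{n+1}^\tau = -\frac{v_{n+1}^\tau-v_n^\tau}{\tau} - W''(v_{n+1}^\tau)\,\mu_{n+1}^\tau + A_{n+1}^\tau + B_{n+1}^\tau\,\mu_{n+1}^\tau,
\end{equation*}
and use that $\mu_{n+1}^\tau\in H_N^2(\Omega)$ together with standard elliptic regularity for the Neumann Laplacian, $\|\mu_{n+1}^\tau\|_{H^2}^2\le C(\|\Delta\mu_{n+1}^\tau\|_2^2+\|\mu_{n+1}^\tau\|_2^2)$. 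Since $W''(r)=a(3r^2-1)$ and $H^2\hookrightarrow L^\infty$, the uniform bound \eqref{d24b} on $\|v^\tau\|_{H^2}$ yields a uniform $L^\infty$-bound on $W''(v^\tau)$; combined once more with \eqref{d24b}, this controls $\|W''(v_{n+1}^\tau)\mu_{n+1}^\tau\|_2$ uniformly. Taking $L^2$-norms, squaring, multiplying by $\tau$, and summing, the contribution of the discrete time derivative is handled by \eqref{d27}, and the $A^\tau, B^\tau$ contributions by the two integral bounds obtained above.

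The main obstacle is the $B^\tau$ estimate, because $B$ is defined implicitly through \eqref{c16} with the potentially degenerate denominator $\|\mu-\overline{\mu}\|_2^2$. This is precisely where the assumption $\mzab=\emptyset$ enters through Lemma~\ref{le:b3}(2) to give the uniform lower bound $m_M>0$, thereby keeping $\kappa_M$ finite along the whole iteration; without it, one could not accumulate the per-step estimate into an integral bound.
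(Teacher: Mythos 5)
Your proposal is correct and follows essentially the same route as the paper: apply Lemma~\ref{le:c3} with $M=E[v_0]$ at every step (legitimate thanks to \eqref{d24} and $\mzab=\emptyset$), absorb the $\|v_{n+1}^\tau-v_n^\tau\|_2^2/\tau^2$ terms via \eqref{d27}, deduce the $A^\tau$ bound from the uniform bound on $\overline{\mu_{n+1}^\tau}$, and recover the $H^2$ bound on $\mu^\tau$ from \eqref{d33} and elliptic regularity. The only (harmless) difference is that you bound $|\overline{\mu_{n+1}^\tau}|$ directly by Cauchy--Schwarz and \eqref{d24b}, whereas the paper uses $\overline{\mu_{n+1}^\tau}=\overline{W'(v_{n+1}^\tau)}$ together with the $H^1\hookrightarrow L^3$ embedding; both work.
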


\begin{proof}
Owing to \eqref{d0} and the time monotonicity \eqref{d24} of $E\left[ v^\tau \right]$, the assumptions of Lemma~\ref{le:c3} are satisfied with $M=E[v_0]$ for all $n\ge 0$ and we obtain the estimates:
\begin{equation}
\left| A_{n+1}^\tau + B_{n+1}^\tau\ \overline{\mu_{n+1}^\tau} \right| \le C \;\;\mbox{ and }\;\; \left| B_{n+1}^\tau \right| \le C \left( 1 + \frac{\left\| v_{n+1}^\tau - v_n^\tau \right\|_2}{\tau} \right)\,, \qquad n\ge 0\,. \label{d36}
\end{equation}
Let $T>\tau$ and $m\ge 1$ be the largest integer smaller than $T/\tau$. On the one hand, we infer from \eqref{d27} and \eqref{d36} that 
\begin{align}
\int_\tau^T \left| B^\tau(t) \right|^2\ dt & \le \tau\ \sum_{n=0}^{m-1} \left| B_{n+1}^\tau \right|^2 \le C \tau\ \sum_{n=0}^{m-1} \left( 1 + \frac{\left\| v_{n+1}^\tau - v_n^\tau \right\|_2^2}{\tau^2} \right) \nonumber \\
& \le C \left( m \tau + 2 E[v_0] \right) \le C\ (1+T)\,. \label{d37}
\end{align}
On the other hand, since $v_{n+1}^\tau \in H_N^2(\Omega)$, we have $\overline{\mu_{n+1}^\tau} = \overline{W'\left( v_{n+1}^\tau \right)}$ and, since $v_{n+1}^\tau \in\mmab^2$,  it follows from the continuous embedding of $H^1(\Omega)$ in $L^3(\Omega)$ and Lemma~\ref{le:b2} that 
$$
\left| \overline{\mu_{n+1}^\tau} \right| = \left| \overline{W'\left( v_{n+1}^\tau \right)} \right| \le C \left( 1 + \left\| v_{n+1}^\tau \right\|_3^3 \right) \le C \left( 1 + \left\| v_{n+1}^\tau \right\|_{H^1}^3 \right) \le C\,.
$$
Consequently, thanks to \eqref{d36} and \eqref{d37}, we have that
\begin{align}
\int_\tau^T \left| A^\tau(t) \right|^2\ dt & \le \tau\ \sum_{n=0}^{m-1} \left| A_{n+1}^\tau \right|^2 \le 2 \tau\ \sum_{n=0}^{m-1} \left( \left| A_{n+1}^\tau + B_{n+1}^\tau\ \overline{\mu_{n+1}^\tau} \right|^2 + \left| B_{n+1}^\tau\ \overline{\mu_{n+1}^\tau} \right|^2 \right) \nonumber \\
& \le C\tau \left( m + \sum_{n=0}^{m-1} \left| B_{n+1}^\tau \right|^2 \right) \le C\ (1+T)\, . \label{d37b}
\end{align}
Finally, we observe that
\begin{equation}
\int_\tau^T \left\| \Delta\mu^\tau(t) \right\|_2^2\ dt \le \tau\ \sum_{n=0}^{m-1} \left\| \Delta\mu_{n+1}^\tau \right\|_2^2\,, \label{d38}
\end{equation}
an , using \eqref{d33} and the continuous embedding of $H^2(\Omega)$ in $L^\infty(\Omega)$, that
\begin{align*}
\left\| \Delta\mu_{n+1}^\tau \right\|_2^2 & \le C \left( \frac{\left\| v_{n+1}^\tau - v_n^\tau \right\|_2^2}{\tau^2} + \left\| W''\left( v_{n+1}^\tau \right)\ \mu_{n+1}^\tau  \right\|_2^2 + \left| A_{n+1}^\tau  \right|^2 + \left\| B_{n+1}^\tau\ \mu_{n+1}^\tau \right\|_2^2 \right) \\
& \le C \left( \frac{\left\| v_{n+1}^\tau - v_n^\tau \right\|_2^2}{\tau^2} + \left\| W''\left( v_{n+1}^\tau \right)\right\|_\infty^2  \left\| \mu_{n+1}^\tau  \right\|_2^2 + \left| A_{n+1}^\tau  \right|^2 + \left| B_{n+1}^\tau \right|^2 \left\| \mu_{n+1}^\tau \right\|_2^2 \right) \\
& \le C \left( \frac{\left\| v_{n+1}^\tau - v_n^\tau \right\|_2^2}{\tau^2} + \left( 1 + \left\| v_{n+1}^\tau \right\|_{H^2}^4 \right)  \left\| \mu_{n+1}^\tau  \right\|_2^2 + \left| A_{n+1}^\tau  \right|^2 + \left| B_{n+1}^\tau \right|^2 \left\| \mu_{n+1}^\tau \right\|_2^2 \right)\,;
\end{align*}
hence, thanks to \eqref{d24b}, we have 
\begin{equation*}
\left\| \Delta\mu_{n+1}^\tau \right\|_2^2 \le C \left( 1 + \frac{\left\| v_{n+1}^\tau - v_n^\tau \right\|_2^2}{\tau^2}  + \left| A_{n+1}^\tau  \right|^2 + \left| B_{n+1}^\tau \right|^2 \right)
\end{equation*}
for $n\in\{0, \ldots, m-1\}$. We then infer from \eqref{d27}, \eqref{d37}, \eqref{d37b}, \eqref{d38}, and the above estimate that
$$
\int_\tau^T \left\| \Delta\mu^\tau(t) \right\|_2^2\ dt \le C\tau\, \left( m + \frac{2\tau E[v_0]}{\tau^2} \right) + \int_\tau^{T+\tau}  \left( \left| A^\tau(t) \right|^2 + \left| B^\tau(t) \right|^2 \right)\ dt  \le C(T)\,.
$$
Combining this estimate with \eqref{d37}, \eqref{d37b}, and the $L^2$-bound \eqref{d24b} on $\mu^\tau$ gives \eqref{d35}.
\end{proof}

Thanks to the above analysis, all the tools required to perform the limit as $\tau\to 0$ are now available and we may thus proceed to identify the behaviour of $(v^\tau)$ as $\tau\to 0$. We begin with a consequence of \eqref{d24b}  which guarantees compactness with respect to the space variable and \eqref{d25} which gives the time equicontinuity: owing to \eqref{d24b}, \eqref{d25}, and the compactness of the embedding of $H^2(\Omega)$ in $H^1(\Omega)$ and $\mathcal{C}\left(\overline{\Omega}\right)$, a refined version of the Ascoli-Arzel\`a theorem \cite[Proposition~3.3.1]{AGS08} ensures that there are a subsequence $\left( v^{\tau_k} \right)_{k\ge 1}$ of $(v^\tau)$ and a function
\begin{equation}
v\in \mathcal{C}\left( [0,\infty)\times\overline{\Omega} \right)\cap \mathcal{C}([0,\infty);H^1(\Omega))\cap L^\infty(0,\infty;H^2(\Omega)) \label{d28b}
\end{equation}
such that 
\begin{equation}
v^{\tau_k}(t) \longrightarrow v(t) \;\;\mbox{ in }\;\; \mathcal{C}\left(\overline{\Omega}\right)\cap H^1(\Omega) \;\;\mbox{ for all }\;\; t\ge 0\,. \label{d28}
\end{equation}
A straightforward consequence of \eqref{d24b}, \eqref{d28}, the continuity of the embedding of $H^2(\Omega)$ in $H^1(\Omega)$ and $\mathcal{C}\left(\overline{\Omega}\right)$, and Lebesgue's dominated convergence theorem is that
\begin{equation}
v^{\tau_k} \longrightarrow v \;\;\mbox{ in }\;\; L^p(0,T;\mathcal{C}\left( \overline{\Omega}\right)\cap H^1(\Omega)) \;\;\mbox{ for all }\;\; p\in [1,\infty) \;\;\mbox{ and }\;\; T>0\,. \label{d28a}
\end{equation}
In addition, it follows from \eqref{d24b} and \eqref{d35} that we may assume that there are functions
$$
\mu\in L^\infty(0,\infty;L^2(\Omega))\cap L_{\text{loc}}^2(0,\infty;H^2(\Omega))\,, \qquad A \in L_{\text{loc}}^2(0,\infty)\,, \qquad B \in L_{\text{loc}}^2(0,\infty)\,,
$$
such that, for all $T>\delta>0$,
\begin{align}
v^{\tau_k} \stackrel{*}{\rightharpoonup} v & \;\;\mbox{ in }\;\; L^\infty(0,T;H^2(\Omega)) \,, \label{d29} \\
\mu^{\tau_k} \stackrel{*}{\rightharpoonup} \mu & \;\;\mbox{ in }\;\; L^\infty(0,T;L^2(\Omega)) \;\;\mbox{ and }\;\;
\mu^{\tau_k} \rightharpoonup \mu  \;\;\mbox{ in }\;\; L^2(\delta,T;H^2(\Omega)) \,, \label{d30} \\
A^{\tau_k} \rightharpoonup A & \;\;\mbox{ in }\;\; L^2(\delta,T) \;\;\mbox{ and }\;\; B^{\tau_k} \rightharpoonup B \;\;\mbox{ in }\;\; L^2(\delta,T) \,. \label{d370} 
\end{align}
Now, since $v^{\tau_k}(t)\in\mmab^2$ and $\mu^{\tau_k}(t) = -\Delta v^{\tau_k}(t) + W'\left( v^{\tau_k}(t) \right)$ for all $t\ge 0$, it readily follows from the convergences \eqref{d28}, \eqref{d29}, and \eqref{d30} that 
\begin{equation}
v(t)\in\mmab^2\;\;\mbox{ and }\;\; \mu(t) = -\Delta v(t) + W'(v(t))\;\;\mbox{ for all }\;\; t\ge 0 \,. \label{d31}
\end{equation}
It remains to derive the equation solved by $v$. To this end, we have to pass to the limit in \eqref{d33} and in particular to identify the limits of the nonlinear terms $\left( W''\left( v^{\tau_k} \right) \mu^{\tau_k} \right)_{k\ge 1}$ and $\left( B^{\tau_k} \mu^{\tau_k} \right)_{k\ge 1}$. Concerning the former, we combine the strong convergence \eqref{d28a} of $\left( v^{\tau_k} \right)_{k\ge 1}$ with the weak convergence \eqref{d30} of $\left( \mu^{\tau_k} \right)_{k\ge 1}$ to obtain that
\begin{equation}
W''\left( v^{\tau_k} \right) \mu^{\tau_k} \rightharpoonup W''(v)\ \mu \;\;\mbox{ in }\;\; L^2((0,T)\times \Omega) \;\;\mbox{ for all }\;\; T>0\,. \label{d400}
\end{equation}
As for the latter, the situation is less clear as the convergences \eqref{d30} and \eqref{d370} are both weak convergences. However, we take advantage at this point of the fact that $B^{\tau_k}$ depends only on time. Indeed, on the one hand, we notice that the strong convergence \eqref{d28a} of $\left( v^{\tau_k} \right)_{k\ge 1}$ implies that $\left( \mu^{\tau_k} \right)_{k\ge 1}$ converges strongly towards $\mu$ in $L^p(0,T;H^1(\Omega)')$ for all $p\in [1,\infty)$ and $T>0$. We combine this convergence with \eqref{d370} to obtain that $\left( B^{\tau_k} \mu^{\tau_k} \right)_{k\ge 1}$ converges towards $B\mu$ in the sense of distributions. On the other hand, the sequence $\left( B^{\tau_k} \mu^{\tau_k} \right)_{k\ge 1}$ is bounded in $L^2((\delta,T)\times \Omega)$ by \eqref{d24b} and \eqref{d35} for all $T>\delta>0$ and is thus weakly compact in that space. Therefore, we have shown that, after possibly extracting a further subsequence, 
\begin{equation}
B^{\tau_k}\ \mu^{\tau_k} \rightharpoonup B\ \mu \;\;\mbox{ in }\;\; L^2((\delta,T)\times \Omega) \;\;\mbox{ for all }\;\; T>\delta>0\,. \label{d401}
\end{equation}
Now, for $t_2>t_1>0$ and $\varphi\in L^2(\Omega)$, we denote the largest integer smaller than $t_i/\tau_k$ by $n_{i,k}$, $i = 1, 2$, and infer from \eqref{d33} that 
\begin{align*}
\int_\Omega \left( v^{\tau_k}(t_2) \right. & - \left. v^{\tau_k}(t_1) \right)\ \varphi\ dx = \int_\Omega \left( v_{n_{2,k}}^{\tau_k} - v_{n_{1,k}}^{\tau_k} \right)\ \varphi\ dx = \sum_{n=n_{1,k}+1}^{n_{2,k}} \int_\Omega \left( v_{n}^{\tau_k} - v_{n-1}^{\tau_k} \right)\ \varphi\ dx \\ 
& = \tau_k\ \sum_{n=n_{1,k}+1}^{n_{2,k}} \int_\Omega \left( \Delta\mu_{n}^{\tau_k} - W''\left( v_{n}^{\tau_k} \right)\ \mu_{n}^{\tau_k} + A_{n}^{\tau_k} + B_{n}^{\tau_k}\ \mu_{n}^{\tau_k} \right)\ \varphi\ dx \\ 
& = \sum_{n=n_{1,k}+1}^{n_{2,k}} \int_{n\tau_k}^{(n+1)\tau_k} \int_\Omega \left( \Delta\mu^{\tau_k} - W''\left( v^{\tau_k} \right)\ \mu^{\tau_k} + A^{\tau_k} + B^{\tau_k}\ \mu^{\tau_k} \right)(t)\ \varphi\ dxdt \\ 
& = \int_{(n_{1,k}+1)\tau_k}^{(n_{2,k}+1)\tau_k} \int_\Omega \left( \Delta\mu^{\tau_k} - W''\left( v^{\tau_k} \right)\ \mu^{\tau_k} + A^{\tau_k} + B^{\tau_k}\ \mu^{\tau_k} \right)(t)\ \varphi\ dxdt
\end{align*}
Clearly, $(n_{i,k}+1)\tau_k\to t_i$ as $k\to\infty$, $i=1,2$. Letting $k\to\infty$ in the above identity gives, thanks to \eqref{d28}, \eqref{d30}, \eqref{d400}, \eqref{d370}, and \eqref{d401}, 
\begin{equation}
\int_\Omega \left( v(t_2) - v(t_1) \right)\ \varphi\ dx = \int_{t_1}^{t_2} \int_\Omega \left( \Delta\mu - W''(v)\ \mu + A + B\ \mu \right)(t)\ \varphi\ dxdt\,. \label{d39}
\end{equation}
It is now straightforward to check that Lemma~\ref{le:d2} and the convergences \eqref{d30} and \eqref{d370} imply that $\mu\in L^2(0,T;H^2(\Omega))$, $A\in L^2(0,T)$, and $B\in L^2(0,T)$ for all $T>0$. Combining these integrability properties with \eqref{d28} ensures that \eqref{d39} is also valid for $t_1=0$. The regularity of $v$ and $\mu$ then allows us to deduce \eqref{a4} from \eqref{d39}.

It remains to check that $A$ and $B$ are given by \eqref{a1} and \eqref{a2}, respectively: first, \eqref{a1} readily follows by integrating \eqref{a4} and using that $\overline{v(t)}=\alpha$ for all $t\ge 0$ and the homogeneous Neumann boundary conditions satisfied by $\mu$. Next, since $F[v(t)]=\beta$ for all $t>0$, we differentiate this identity with respect to time and, using once more the homogeneous Neumann boundary conditions for $\mu$ and the fact that 
$$
 \int_\Omega  \overline{\mu}  \left( \mu - \overline{\mu}\right) \\ dx =0 \, ,
$$
we obtain
\begin{equation*}
0 = \int_\Omega \mu\ \partial_t v \ dx = - \|\nabla\mu\|_2^2 - \int_\Omega W''(v)\ \mu^2\ dx + \left( A + B \overline{\mu} \right) \int_\Omega \mu\ dx + B\ \|\mu - \overline{\mu}\|_2^2\,.
\end{equation*}
The identity \eqref{a2} now follows from the above identity with the help of \eqref{a1}.

Finally, fix $t>0$ and assume for contradiction that there is a sequence $(s_n)_{n\ge 1}$ in $[0,t]$ such that $\|(\mu - \overline{\mu})(s_n)\|_2\to 0$ as $n\to \infty$. Since $[0,t]$ is compact, we may assume that $s_n\to s_\infty$ as $n\to \infty$ for some $s_\infty\in [0,t]$. Thanks to the regularity of $v$, we actually have $v\in \mathcal{C}([0,t];H^1(\Omega)\cap \mathcal{C}(\overline{\Omega}))$ so that $((\mu - \overline{\mu})(s_n))_{n\ge 1}$ converges towards $(\mu - \overline{\mu})(s_\infty)$ in $H^1(\Omega)'$. Since it also converges to zero in $L^2(\Omega)$, we have shown that $(\mu - \overline{\mu})(s_\infty)=0$ which implies that $v(s_\infty)\in\mzab$ and contradicts \eqref{a0}. Therefore, $\|\mu - \overline{\mu} \|_2$ is bounded from below by a positive constant in $[0,t]$ and the proof of the existence part of Theorem~\ref{th:a1} is complete. 

\section{Uniqueness}\label{sec:uniq}

Consider $\alpha\in\RR$ and $\beta\in (\beta_\alpha,\infty)$ satisfying \eqref{a0}. Let $v_i$, $i=1,2$, be two solutions to \eqref{pf1}-\eqref{pf5} with $\mu_i := - \Delta v_i + W'(v_i)$ and associated Lagrange multipliers $(A_i,B_i)$, $i=1,2$. Owing to the regularity of $v_i$ and $\mu_i$, $i=1,2$, stated in Theorem~\ref{th:a1}, and the embedding of $H^2(\Omega)$ in $L^\infty(\Omega)$, the function $\phi$ defined by 
$$
\phi(t) := |B_1(t)| + |B_2(t)| + \|\mu_1(t)\|_{H^2} + \|\mu_2(t)\|_{H^2}\,, \qquad t>0\,,
$$
satisfies
\begin{equation}
\phi\in L^2(0,t) \;\;\mbox{ and }\;\; \|\mu_1(t)\|_\infty + \|\mu_2(t)\|_\infty \leq C \phi(t) \;\;\mbox{ for all }\;\; t>0\,. \label{u10} 
\end{equation}
Also, given $T>0$, Theorem~\ref{th:a1} (in particular \eqref{a3}) and the embedding of $H^2(\Omega)$ in $L^\infty(\Omega)$ ensure that there is $K_T>1$ such that
\begin{equation}
\|v_1(t)\|_\infty + \|v_2(t)\|_\infty + \|\mu_1(t)\|_2 + \|\mu_2(t)\|_2 \le K_T \;\;\mbox{ for all }\;\; t\in (0,T)\,, \label{u2}
\end{equation}
and 
\begin{equation}
\min{\left\{ \left\| \mu_1(t) - \overline{\mu_1(t)} \right\|_2 , \left\| \mu_2(t) - \overline{\mu_2(t)} \right\|_2 \right\}} \ge \frac{1}{K_T}> 0  \;\;\mbox{ for all }\;\; t\in (0,T)\,. \label{u3}
\end{equation}

Now, $v_1-v_2$ solves
\begin{equation}
\partial_t (v_1-v_2) - \Delta (\mu_1-\mu_2) = - W''(v_1)\ \mu_1 + W''(v_2)\ \mu_2 + A_1 - A_2 + B_1\ \mu_1 - B_2\ \mu_2 \label{u1}
\end{equation}
in $(0,\infty)\times\Omega$ with homogeneous Neumann boundary conditions for $v_1-v_2$ and $\mu_1-\mu_2$. As a first step of the uniqueness proof, we estimate some terms in the right-hand side of \eqref{u1}. A first consequence of \eqref{u2} is that
\begin{align}
\left\| W''(v_1)\ \mu_1 - W''(v_2)\ \mu_2 \right\|_2 \le & \left\| W''(v_1) \right\|_\infty \left\| \mu_1 - \mu_2 \right\|_2 + \left\| \mu_2 \right\|_\infty \left\| W''(v_1) - W''(v_2) \right\|_2 \nonumber\\
\le & \left\| W'' \right\|_{L^\infty(-K_T,K_T)} \left\| \mu_1 - \mu_2 \right\|_2 + C \phi \left\| W''' \right\|_{L^\infty(-K_T,K_T)} \left\| v_1-v_2 \right\|_2 \nonumber \\
\le & C(T) \left( \left\| \mu_1 - \mu_2 \right\|_2 + \phi \left\| v_1 - v_2 \right\|_2 \right)\,. \label{u4}
\end{align}
It then readily follows from \eqref{u4} that
\begin{equation}
\left| \overline{W''(v_1)\ \mu_1} - \overline{W''(v_2)\ \mu_2} \right| \le C(T) \left( \left\| \mu_1 - \mu_2 \right\|_2 + \phi \left\| v_1 - v_2 \right\|_2 \right)\,. \label{u5}
\end{equation}
We next estimate $B_1-B_2$. To this end, we first use \eqref{a2} to compute
\begin{equation}
\left| \left\| \mu_1 - \overline{\mu_1} \right\|_2^2\ B_1 - \left\| \mu_2 - \overline{\mu_2} \right\|_2^2\ B_2 \right| \le \sum_{j=1}^4 I_j\,, \label{u6a}
\end{equation}
where
\begin{align*}
I_1 := \left| \left\| \nabla\mu_1 \right\|_2^2 - \left\| \nabla\mu_2 \right\|_2^2 \right|\,, \quad & I_2 := \int_\Omega \left| W''(v_1)\ \mu_1^2 - W''(v_2)\ \mu_2^2 \right|\ dx\,, \\
I_3 := \|\mu_1\|_1  \left| \overline{W''(v_1)\ \mu_1} - \overline{W''(v_2)\ \mu_2} \right|\,, \quad & I_4 := \left| \overline{W''(v_2)\ \mu_2} \right|\ \|\mu_1-\mu_2\|_1
\end{align*}
Integrating by parts we find
\begin{align}
I_1 \le & \left| \int_\Omega \nabla(\mu_1+\mu_2) \cdot \nabla (\mu_1-\mu_2)\ dx \right| = \left| \int_\Omega (\mu_1-\mu_2)\ \Delta(\mu_1+\mu_2)\ dx \right| \nonumber \\
\le & \|\mu_1+\mu_2\|_{H^2}\ \|\mu_1-\mu_2\|_2 \le \phi\ \|\mu_1-\mu_2\|_2\,. \label{u6b}
\end{align}
We next deduce from \eqref{u2} and \eqref{u4} that 
\begin{align}
I_2 \le & \|\mu_1\|_2 \left\| W''(v_1)\ \mu_1 - W''(v_2)\ \mu_2 \right\|_2 + \left\| W''(v_2)\ \mu_2 \right\|_2 \|\mu_1-\mu_2\|_2 \nonumber \\
\le & C(T) \left( \left\| \mu_1 - \mu_2 \right\|_2 + \phi \left\| v_1 - v_2 \right\|_2 \right) + \left\| W''(v_2) \right\|_\infty \left\| \mu_2 \right\|_2 \|\mu_1-\mu_2\|_2 \nonumber \\
\le & C(T) \left( \left\| \mu_1 - \mu_2 \right\|_2 + \phi \left\| v_1 - v_2 \right\|_2 \right)\,. \label{u6c}
\end{align}
The last two terms are easier to estimate and we use \eqref{u2} and \eqref{u5} to obtain that 
\begin{align}
I_3 + I_4 \le & |\Omega|^{1/2}\ \|\mu_1\|_2 \left| \overline{W''(v_1)\ \mu_1} - \overline{W''(v_2)\ \mu_2} \right| + \left\| W''(v_2) \right\|_\infty \left\| \mu_2 \right\|_2 \|\mu_1-\mu_2\|_2 \nonumber \\
\le & C(T) \left( \left\| \mu_1 - \mu_2 \right\|_2 + \phi \left\| v_1 - v_2 \right\|_2 \right)\,. \label{u6d}
\end{align} 
Using \eqref{u2} and \eqref{u3}, we deduce from \eqref{u6a}--\eqref{u6d} that
\begin{align}
|B_1-B_2| \le & \frac{\left| \left\| \mu_1 - \overline{\mu_1} \right\|_2^2\ B_1 - \left\| \mu_2 - \overline{\mu_2} \right\|_2^2\ B_2 \right|}{\left\| \mu_1 - \overline{\mu_1} \right\|_2^2} + \frac{|B_2|}{\left\| \mu_1 - \overline{\mu_1} \right\|_2^2} \left| \left\| \mu_1 - \overline{\mu_1} \right\|_2^2 - \left\| \mu_2 - \overline{\mu_2} \right\|_2^2 \right| \nonumber \\
\le & K_T^2\ \sum_{j=1}^4 I_j + K_T^2\ \phi\ \left\| \mu_1 - \overline{\mu_1} + \mu_2 - \overline{\mu_2} \right\|_2\ \left\| \mu_1 - \overline{\mu_1} - \mu_2 + \overline{\mu_2} \right\|_2 \nonumber \\
\le & C(T)\ (1+\phi)\  \left( \left\| \mu_1 - \mu_2 \right\|_2 + \left\| v_1 - v_2 \right\|_2 \right)\,. \label{u7}
\end{align}

After this preparation, we multiply \eqref{u1} by $v_1-v_2$ and integrate over $\Omega$ to obtain
\begin{align*}
\frac{1}{2}\ \frac{d}{dt} \|v_1-v_2\|_2^2 - & \int_\Omega (\mu_1-\mu_2)\ \Delta (v_1-v_2)\ dx \\
= & - \int_\Omega (v_1-v_2)\ \left( W''(v_1)\ \mu_1 - W''(v_2)\ \mu_2 \right)\ dx \\
& + \left( A_1 - A_2 \right)\ \int_\Omega (v_1-v_2)\ dx \\
& + (B_1-B_2)\ \int_\Omega \mu_1\ (v_1-v_2)\ dx + B_2\ \int_\Omega (\mu_1-\mu_2)\ (v_1-v_2)\ dx\,.
\end{align*}
Since
\begin{align*}
- \int_\Omega (\mu_1-\mu_2)\ \Delta (v_1-v_2)\ dx = & \int_\Omega (\mu_1-\mu_2) \left( \mu_1 - W'(v_1) -\mu_2 + W'(v_2) \right)\ dx \\
\ge & \|\mu_1-\mu_2\|_2^2 - \|\mu_1-\mu_2\|_2\ \left\| W'(v_1)-W'(v_2) \right\|_2 \\
\ge & \frac{1}{2}\ \|\mu_1-\mu_2\|_2^2 - \frac{1}{2}\ \left\| W'' \right\|_{L^\infty(-K_T,K_T)}^2\ \|v_1-v_2\|_2^2 \\
\ge & \frac{1}{2}\ \|\mu_1-\mu_2\|_2^2 - C(T)\ \|v_1-v_2\|_2^2
\end{align*}
by \eqref{u2} and Young's inequality, we infer from  $\overline{v_1}=\overline{v_2}=\alpha$ that
\begin{align*}
\frac{1}{2}\ \frac{d}{dt} \|v_1-v_2\|_2^2 + & \frac{1}{2}\ \|\mu_1-\mu_2\|_2^2 - C(T)\ \|v_1-v_2\|_2^2 \\
& \le \| v_1-v_2\|_2\ \left\| W''(v_1)\ \mu_1 - W''(v_2)\ \mu_2 \right\|_2 + |B_1-B_2|\ \|\mu_1\|_2\ \|v_1-v_2\|_2 \\
& + |B_2|\ \|\mu_1-\mu_2\|_2\ \|v_1-v_2\|_2 \,.
\end{align*}
Using Young's inequality and \eqref{u2}, \eqref{u4}, and \eqref{u7}, we deduce 
\begin{align*}
\frac{d}{dt} \|v_1-v_2\|_2^2 & + \|\mu_1-\mu_2\|_2^2 \\
\le & C(T)\ \|v_1-v_2\|_2^2 + C(T)\ (1+\phi)\ \left( \left\| \mu_1 - \mu_2 \right\|_2 + \left\| v_1 - v_2 \right\|_2 \right)\ \|v_1-v_2\|_2 \\
& + \phi\ \|\mu_1-\mu_2\|_2\ \|v_1-v_2\|_2 \\
\le & \frac{1}{2}\ \|\mu_1-\mu_2\|_2^2 + C(T)\ \left( 1 + \phi^2 \right)\ \left\| v_1 - v_2 \right\|_2^2\,.
\end{align*}
Consequently, we obtain
$$
\frac{d}{dt} \left( \|(v_1-v_2)(t) \|_2^2 + \frac{1}{2}\ \int_0^t \|(\mu_1-\mu_2)(s) \|_2^2 \ ds \right) \le C(T)\ \left( 1 + \phi^2 (t) \right)\ \left\| (v_1 - v_2 \right)(t) \|_2^2 
$$
and, since $(1+ \phi^2)\in L^1(0,T)$ by \eqref{u10}, the Gronwall lemma gives
\begin{equation} \label{pier1}
\|(v_1-v_2)(t)\|_2^2 + \frac{1}{2}\ \int_0^t \|(\mu_1-\mu_2)(s) \|_2^2 \ ds \le C(T)\ \|(v_1-v_2)(0)\|_2^2  \quad \text{ for all }\;\; t\in [0,T]\,. 
\end{equation}
The uniqueness statement of Theorem~\ref{th:a1} then follows. 
\begin{remark}
By \eqref{pier1} we have actually established a property of continuous dependence
of the solutions to \eqref{pf1}-\eqref{pf5} on the initial data.
\end{remark}

\section*{Acknowledgements}

We warmly thank Xavier Cabr\'e and Giuseppe Savar\'e for illuminating discussions. The authors gratefully acknowledge financial support and kind hospitality of the Institut de Math\'ematiques de Toulouse, Universit\'e Paul Sabatier, and the IMATI of CNR in Pavia.

%
\bibliographystyle{abbrv}
\bibliography{PCPhL2}

\begin{thebibliography}{10}

\bibitem{AGS08}
L.~Ambrosio, N.~Gigli, and G.~Savar{\'e}.
\newblock {\em Gradient flows in metric spaces and in the space of probability
  measures}.
\newblock Lectures in Mathematics ETH Z{\"u}rich. Birkh{\"a}user Verlag, Basel,
  second edition, 2008.

\bibitem{BGN08}
J.~W. Barrett, H.~Garcke, and R.~N{\"u}rnberg.
\newblock Parametric approximation of {W}illmore flow and related geometric
  evolution equations.
\newblock {\em SIAM J. Sci. Comput.}, 31(1):225--253, 2008.

\bibitem{BM10}
G.~Bellettini and L.~Mugnai.
\newblock Approximation of {H}elfrich's functional via diffuse interfaces.
\newblock {\em SIAM J. Math. Anal.}, 42(6):2402--2433, 2010.

\bibitem{Ca09}
F.~Campelo.
\newblock Modeling morphological instabilities in lipid membranes with anchored
  amphiphilic polymers.
\newblock {\em J. Chem. Biol.}, 2:65--80, 2009.

\bibitem{CHM06}
F.~Campelo and A.~Hern{\'a}ndez-Machado.
\newblock Dynamic model and stationary shapes of fluid vesicles.
\newblock {\em Eur. Phys. J. E}, 20:37--45, 2006.

\bibitem{CHM07}
F.~Campelo and A.~Hern{\'a}ndez-Machado.
\newblock Shape instabilities in vesicles: A phase-field model.
\newblock {\em Eur. Phys. J. Special Topics}, 143:101--108, 2007.

\bibitem{CVxx}
R.~Choksi and M.~Veneroni.
\newblock Global minimizers for the doubly-constrained {H}elfrich energy: the
  axisymmetric case.
\newblock arXiv:1202.1979.

\bibitem{CL11}
P.~Colli and {\relax Ph}.~Lauren{\c{c}}ot.
\newblock A phase-field approximation of the {W}illmore flow with volume
  constraint.
\newblock {\em Interfaces Free Bound.}, 13(3):341--351, 2011.

\bibitem{DG91}
E.~De~Giorgi.
\newblock Some remarks on {$\Gamma$}-convergence and least squares method.
\newblock In {\em Composite media and homogenization theory ({T}rieste, 1990)},
  volume~5 of {\em Progr. Nonlinear Differential Equations Appl.}, pages
  135--142. Birkh\"auser Boston, Boston, MA, 1991.

\bibitem{DLRW05}
Q.~Du, C.~Liu, R.~Ryham, and X.~Wang.
\newblock A phase field formulation of the {W}illmore problem.
\newblock {\em Nonlinearity}, 18(3):1249--1267, 2005.

\bibitem{DLW04}
Q.~Du, C.~Liu, and X.~Wang.
\newblock A phase field approach in the numerical study of the elastic bending
  energy for vesicle membranes.
\newblock {\em J. Comput. Phys.}, 198(2):450--468, 2004.

\bibitem{DLW06}
Q.~Du, C.~Liu, and X.~Wang.
\newblock Simulating the deformation of vesicle membranes under elastic bending
  energy in three dimensions.
\newblock {\em J. Comput. Phys.}, 212(2):757--777, 2006.

\bibitem{Li91}
R.~Lipowsky.
\newblock The conformation of membranes.
\newblock {\em Nature}, 349:475--481, 1991.

\bibitem{LM00}
P.~Loreti and R.~March.
\newblock Propagation of fronts in a nonlinear fourth order equation.
\newblock {\em European J. Appl. Math.}, 11(2):203--213, 2000.

\bibitem{MM77}
L.~Modica and S.~Mortola.
\newblock Un esempio di {$\Gamma ^{-}$}-convergenza.
\newblock {\em Boll. Un. Mat. Ital. B (5)}, 14(1):285--299, 1977.

\bibitem{Mo05}
R.~Moser.
\newblock A higher order asymptotic problem related to phase transitions.
\newblock {\em SIAM J. Math. Anal.}, 37(3):712--736 (electronic), 2005.

\bibitem{Ri08}
T.~Rivi{\`e}re.
\newblock Analysis aspects of {W}illmore surfaces.
\newblock {\em Invent. Math.}, 174(1):1--45, 2008.

\bibitem{RS06}
M.~R{\"o}ger and R.~Sch{\"a}tzle.
\newblock On a modified conjecture of {D}e {G}iorgi.
\newblock {\em Math. Z.}, 254(4):675--714, 2006.

\bibitem{Si93}
L.~Simon.
\newblock Existence of surfaces minimizing the {W}illmore functional.
\newblock {\em Comm. Anal. Geom.}, 1(2):281--326, 1993.

\bibitem{Wa08}
X.~Wang.
\newblock Asymptotic analysis of phase field formulations of bending elasticity
  models.
\newblock {\em SIAM J. Math. Anal.}, 39(5):1367--1401, 2008.

\bibitem{Wi93}
T.~J. Willmore.
\newblock {\em Riemannian geometry}.
\newblock Oxford Science Publications. The Clarendon Press Oxford University
  Press, New York, 1993.

\bibitem{WXxx}
H.~Wu and X.~Xu.
\newblock Global regularity and stability of a hydrodynamic system modeling
  vesicle and fluid interactions.
\newblock arXiv:1202.4869.

\bibitem{Ze85}
E.~Zeidler.
\newblock {\em Nonlinear functional analysis and its applications. {III}}.
\newblock Springer-Verlag, New York, 1985.
\newblock Variational methods and optimization, Translated from the German by
  Leo F. Boron.

\end{thebibliography}
%

\end{document}